\newtheorem{thm}{Theorem}[section]
\theoremstyle{definition}
\theoremstyle{plain}
\newtheorem{lem}[thm]{Lemma}
\DeclareMathOperator*{\epi}{\Pi}
\journal{Computers and Mathematics with Applications}
\begin{document}

\begin{frontmatter}



\title{ A Unified Analysis of Nonconforming Virtual Element Methods for Convection Diffusion Reaction Problem }


\author{Dibyendu Adak, E.Natarajan}

\address{Department of Mathematics, \\
Indian Institute of Space Science and Technology, \\
Thiruvananthapuram-695547, Kerala, India }

\begin{abstract}
We discuss nonconforming virtual element method for convection dominated (diffusive coefficient is very small compared to convective coefficient and reaction coefficient ) convection-diffusion-reaction equation using $L^{2}$ projection operator.In this paper we stabilize the stabilization terms $\int_{T}(\vec{b}.\nabla u)(\vec{b}.\nabla v)$ using same technique which is used for  stabilization of symmetric part in VEM, where $T$ is an  arbitrary element, and assume $H^{2}(T)$ regularity of $v_{h}|_{T}$ on each element to prove polynomial consistency where $v_{h}$ is approximate solution.We have shown that linear nonconforming VE is not convergent for convection dominated convection-diffusion reaction problem and higher regularity of $f$,source term is also needed for convergence analysis.The novelty of this paper is we introduce a new SDFEM type nonconforming virtual element method for convection-dominated convection diffusion reaction equation, and discuss the computability issue using degrees of freedom of element without   explicit knowledge of basis functions of virtual element methods.The present framework is stable in the limit of vanishing diffusion.
\end{abstract}

\begin{keyword}
Virtual element \sep SDFEM \sep $L^{2}-$projection


\end{keyword}

\end{frontmatter}


\section{Introduction}
\label{temp1}
In recent times the virtual element method has been successfully analysed and applied to a 
great variety of problems in $2$D as well as $3$D. Virtual element method has a noticeable 
similarity with mimetic finite difference method 
\cite{brezzi2009mimetic, de2014nonconforming,da2008higher,droniou2010unified}. The advantage 
of virtual element method is that we can generalize analysis framework for any order of 
desired accuracy and we can compute solution on each element $T$ using only degree of freedoms 
without having explicit knowledge of basis functions. The test and trial functions contain 
polynomials of degree $\leq$ $k$ with some additional continuous functions which are 
solution of model problem.

Conforming virtual element has been studied for elliptic equation in \cite{beirao2013basic}, linear 
elasticity problem in \cite{da2013virtual}, biharmonic plate bending problem 
\cite{brezzi2013virtual}. Recently nonconforming virtual element method has been analysed for 
diffusion problem by Ayuso et al \cite{de2014nonconforming}. The variational 
or weak formulation of all these above said model problem are symmetric hence stability 
analysis and polynomial consistency can be easily carried out using elliptic projector 
operator\cite{ahmad2013equivalent}, for any order of accuracy $k$. But if the variational 
or weak formulation is non symmetric then the stability analysis does not follow easily and 
requires further analysis.

In this paragraph we discuss about the computational aspects of the methods to be proposed. We compute moments of 
test function using degrees of freedom of finite element. It has been shown in 
\cite{beirao2013basic,ahmad2013equivalent} that elliptic projector operator is computable 
for polynomial of degree upto $k$ where $k$ is accuracy of the element, but $L^{2}$ 
projection operator is partially computable when degree of polynomial $\leq$ $k-2$. But 
after little modification of virtual element space we can compute $L^{2}$-projection using 
elliptic projector operator which is explained in \cite{ahmad2013equivalent}. The main 
advantage of using $L^{2}$ projection is we can use this operator even for non-symmetric 
bilinear form like convection-diffusion-reaction equation, and it has been observed that 
polynomial consistency, stability analysis and convergence analysis can be easily carried 
out without any difficulty.

In this paper we consider the convection-diffusion-reaction problem
\begin{eqnarray}
-\epsilon \, \Delta u+\vec{b} \cdot \nabla u+c \, u & = & f \,\,\,  \text{in} \, \,  \Omega \nonumber \\ 
                                     u & = & u_{b} \,\,  \text{on} \, \, \partial \Omega 
                                     \label{temp2}
\end{eqnarray}
under the assumptions that $\vec{b},c,f$ are sufficiently smooth functions and $\epsilon$ is a 
small perturbation $(\epsilon \ll 1)$. The above said problem is not stable in standard 
Galerkin method hence numerical solution produce non-physical oscillation at boundary 
layer, which contaminates the global computed numerical solution. There are several remedy 
to overcome this oscillation. Streamline-diffusion finite element is famous among 
them. Discrete bilinear form $a_{h}(u_{h},v_{h})$ is coercive 
i.e. $a_{h}(v_{h},v_{h})\geq C \| v_{h} \|^{2}_{1}$ in $H^{1}(\Omega)$ norm which is 
unable to capture large growth of $\|\vec{b} \cdot \nabla u \|_{0}$ at the boundary layer 
hence produce non physical oscillation. In streamline diffusion method we add additional 
term $\int_{T}|\vec{b} \cdot \nabla u|^{2} \ dT$ and introduce new convective term dependent 
norm $|||.|||$ to capture this oscillation. Using the domain dependent norm a 
new nonconforming analysis of streamline diffusion method for convection dominated 
convection-diffusion equation has been studied by 
Tobiska et al \cite{knobloch2003p,roos2008robust,john1997nonconforming}.

In this paper we analyse non conforming virtual element method for convection-diffusion-reaction 
equation using external $L^{2}$ projection operator. To prove polynomial consistency 
we have assumed higher regularity $u_{h}|_{T} \in H^{3}(T)$ on each element $T$. In 
nonconforming VEM approximate solutions are discontinuous along interior edges except 
at Gauss-Lobatto points on the edges. In convergence analysis we see that two additional 
jump term which is known as consistency 
error \cite{knobloch2003p,brenner2008mathematical,ciarlet2002finite}, arise  because of this 
nonconformity along interior edges. We shall use patch 
test \cite{irons1972experience,fortin1983non} which is a crucial property of nonconforming 
method to bound these jump terms. The layout of the paper is as follows. In 
section \ref{sec1} we discuss continuous weak formulation and basic setting of the model 
problem $(\ref{temp2})$. Since construction of non conforming virtual element and degrees of freedom 
are already defined in \cite{de2014nonconforming} we introduce those discussion with local 
and global settings of nonconforming VEM  briefly in section \ref{sec3}. Discrete stability 
and polynomial consistency have been studied in section \ref{sec7}. Using mesh 
dependent norm \cite{john1997nonconforming} discrete coercivity has been discussed 
in section \ref{sec9}. Boundedness of discrete and continuous bilinear form have been 
studied in section \ref{sec10}. Consistency error estimation is discussed in  
section \ref{sec12} and computational issue is discussed in 
section \ref{sec13}. In the section \ref{sec14} we discuss convergence analysis 
and a priori error estimation. Finally some conclusions are drawn in section \ref{sec16}. 

\section{Continuous Problem \label{sec1}} 
Let the domain $\Omega$ in $\mathbb{R}^{d}$ with 
$d=2,3$ be a bounded open polygonal domain with straight boundary edges for $d=2$ and 
polyhedral domain with flat boundary faces for $d=3$. We consider the model problem 
described in (\ref{temp2}) with sufficient regularity of $\vec{b},c,f$. The 
variational formulation of the model problem (\ref{temp2}) reads as, find $u\in V$ such that 
\begin{equation}
 A(u,v)=<f,v> \quad \forall \ v \in V
\label{temp3}
\end{equation}   
where the bilinear form $A(\cdot,\cdot): V \times V \rightarrow \mathbb{R}$ is given by 

\begin{equation}
A(u,v)=\int_{\Omega} \epsilon \, \nabla u \cdot \nabla v \ dT +\int_{\Omega} \vec{b} \cdot \nabla u\ v+\int_{\Omega} c \, u \, v  \quad \forall \ u,v \in V
\nonumber
\end{equation}
and \begin{equation}
<f,v>=\int_{\Omega} f v \nonumber
\end{equation}
$<\cdot,\cdot>$ denotes the duality product between the function space 
$V^{'}$ and $V$, where $V$ is defined by $V:=H^{1}_{0}(\Omega)$.

We consider the assumption
\begin{equation}
\left(c-\frac{1}{2} \, \nabla \cdot \vec{b} \right) \geq c_{0}>0
\label{asmp1}
\end{equation}

We define the elemental contributions of the bilinear form $A(\cdot,\cdot)$ by
\begin{eqnarray}
a(u,v) &:=&\int_{\Omega} \epsilon \, \nabla u \cdot \nabla v \ d\Omega \nonumber \\
b(u,v) &:=&\int_{\Omega} (\vec{b} \cdot \nabla u) \, v \ d\Omega \nonumber \\
c(u,v) &:=& \int_{\Omega} c \, u \, v \, d\Omega \nonumber
\end{eqnarray}

\begin{eqnarray}
A(v,v) &=& \int_{\Omega} \epsilon \, \vert \nabla v \vert^{2}+\int_{\Omega}(\vec{b} \cdot \nabla v) \, v+\int_{\Omega} c \, v^{2} \nonumber \\
       &=& \int_{\Omega} \epsilon \, \vert \nabla v \vert^{2} +\int_{\Omega} \left(c-\frac{1}{2} \nabla \cdot \vec{b} \right) v^{2} \nonumber \\
       &\geq & \int_{\Omega} \epsilon \, \vert \nabla v \vert^{2}+\int_{\Omega} c_{0} \, v^{2} \nonumber \\
       &\geq & C \, \|v\|^{2}_{H^{1}(\Omega)}
       \label{temp4} 
\end{eqnarray}

Inequality (\ref{temp4}) implies that $A(\cdot,\cdot)$ is coercive in 
$H^{1}(\Omega)$ norm. Together with coercivity and boundedness of 
$A(\cdot,\cdot)$ Lax-milgram theorem ensures that there exists a unique solution to 
the variational form (\ref{temp3}).
 
\subsection{Basic Setting \label{sec2}} 
Let $\{ \tau_{h}\}_{h}$ be a family of triangulation of the domain $\Omega$. Each 
triangulation $\tau_{h}$ consists of a finite number of elements $T$ such 
that $\bar{\Omega}= \underset{T\in \tau_{h}} \bigcup T$ and any two different 
elements $T_{1},T_{2} \in \tau_{h}$ are either disjoint or possess 
either a common vertex or a common edge. We assume following on the family of partitions.
 
There exists a positive $\rho >0 $ \ such that 
\begin{itemize} 
\item[(Z1)] for every element $ T$ and for every edge/face $e\subset \partial T $, we have $h_{e}\geq \rho h_{T} $.
\item[(Z2)] every element $T $ is star-shaped with respect to all the points of a sphere of radius $\geq \rho h_{T} $.
\item[(Z3)] for $d=3$, every face $e \in \varepsilon_{h}$ is star-shaped w.r.t. all the points of a disk having radius   $\geq \rho h_{e} $.
\end{itemize}
 
$\varepsilon^{0}_{h}$ and $\varepsilon^{\partial}_{h}$ denote the set of interior and 
boundary edges respectively. The maximum of the diameters of element $T \in \tau_{h}$ 
will be denoted by $h$, 
\begin{center}
 i.e.\quad$ h=\underset{T \in \tau_{h}}{\mathrm{max}}\{h_{T}\} $
 \end{center}
 We introduce the broken sobolev space for any integer $m>0$
 \begin{equation}
 H^{m}(\tau_{h}):=\epi_{T\in \tau_{h}}H^{m}(T)=\{v \in L^{2}(\Omega):v|_{T} \in H^{m}(T)\} \nonumber
 \end{equation}
 and define the broken $H^{m}$ norm
 
 \begin{equation}
 \|v\|^{2}_{m,h}=\sum_{T \in \tau_{h}} \|v \|^{2}_{m,T} \quad \forall v\in H^{m}(\tau_{h}) \nonumber
 \end{equation}
 For $m=1$ the broken $H^{1}$-seminorm 
 \begin{equation}
 \vert v \vert^{2}_{1,h}=\sum_{T\in \tau_{h}} \|\nabla v\|^{2}_{0,T} \quad \forall v \in H^{1}(\tau_{h}) \nonumber
 \end{equation}
 
 Let $e\in \varepsilon^{h}_{0}$ be  an interior edge and let $T^{+}$ and $T^{-}$ be two 
 triangle that shares the common edge $e$ and let $n^{\pm}_{e}$ denote the unit normal on $e$ in the outward 
 direction w.r.t. $T^{\pm}$. We then define the jump operator as:
 \begin{equation}
 [|v|]:=v^{+}n^{+}_{e}+v^{-}n^{-}_{e} \quad \text{on} \quad e\in \varepsilon^{0}_{h} \nonumber
 \end{equation}
 and 
 \begin{equation}
 [|v|]:= vn_{e} \quad \text{on} \quad e\in \varepsilon^{\partial}_{h} \nonumber
 \end{equation}

Now we introduce the space that satisfies the continuity at Gauss-Lobatto points on the edges. For 
an integer $k\geq 1$, we define 
\begin{equation}
H^{1,nc}(\tau_{h};k)=\left\{v\in H^{1}(\tau_{h}):\int_{e}[|v|] \cdot n_{e} \, q \, ds=0 \quad q\in \mathbb{P}^{k-1}(e), \forall e\in \varepsilon_{h} \right\} \nonumber
\end{equation}
$H^{1,nc}(\tau_{h};k)$ satisfy patch-test of order $k$ 
\cite{irons1972experience}. $H^{1,nc}(\tau_{h},1)$ is the space with minimal required 
order of patch test to ensure convergence analysis of diffusion dominated 
convection-diffusion-reaction equation. But if the problem is convection dominated then 
we need higher order of patch test atleast $k=2$ which is explained in section \ref{sec13}.

\section{Nonconforming Virual Element Method \label{sec3}}
We have already discussed the continuous setting of variational form of 
model problem (\ref{temp2}) in section \ref{sec1}. We shall present the variational 
form in a different way which will help us to prove the convergence analysis without 
disturbing the weak solution and its corresponding weak formulation (\ref{temp3}).

Applying integration by parts to the convective term $(\vec{b} \cdot \nabla u,v)$ we obtain
\begin{equation}
\int_{\Omega}(\vec{b} \cdot \nabla u) \, v = \frac{1}{2} \left[\int_{\Omega}(\vec{b} \cdot \nabla u)v-\int_{\Omega}(\vec{b} \cdot \nabla v)u+\int_{\Omega}\nabla \cdot \vec{b} \, u \, v \right] \quad \forall\  u,v \in H^{1}_{0}(\Omega) \nonumber
\end{equation}
Bilinear form can be written as 
\begin{equation}
A(u,v)=\sum_{T\in \tau_{h}}A^{T}(u,v) \nonumber \\
\end{equation}
where $A^{T}(\cdot,\cdot)$ is restriction of $A(\cdot,\cdot)$ on each triangle 
$T \in \tau_{h}$. 
\begin{equation}
A^{T}(u,v)=\int_{T} \epsilon \, \nabla u \cdot \nabla v+b^{T}(u,v)+\int_{T}c\ u \, v+b^{T,stab}(u,v) \nonumber
\end{equation} 
where
\begin{equation}
b^{T}(u,v)=\frac{1}{2} \left[\int_{T}(\vec{b} \cdot \nabla u)v-\int_{T}(\vec{b} \cdot \nabla v)u-\int_{T}(\nabla \cdot \vec{b}) \, u \, v \right] 
\label{new2}
\end{equation}

$b^{T}(u,v)$ can be split into two parts $b^{T,sym}(u,v)$ and $b^{T,skew}(u,v)$
where
\begin{equation}
b^{T,sym}(u,v)=\frac{1}{2} \int_{T} (\nabla \cdot \vec{b}) uv  \nonumber 
\end{equation}

\begin{equation}
b^{T,skew}(u,v)=\frac{1}{2} \left[\int_{T}(\vec{b} \cdot \nabla u)v-\int_{T}(\vec{b} \cdot \nabla v)u \right]
\end{equation}

and
\begin{equation}
b^{T,stab}(u,v)=\int_{T}(-\epsilon \, \Delta u+\vec{b} \cdot \nabla u+c \, u)\ \delta_{T}\ \vec{b} \cdot \nabla v \nonumber \\
\end{equation}

and right hand side function $F(\cdot)$ can be written as
\begin{equation}
F(v)=\int_{\Omega} fv+\sum_{T \in \tau_{h}} \int_{T}f \ \delta_{T} \, \vec{b} \cdot \nabla v \nonumber
\end{equation}

We introduce the nonconforming virtual finite element method for the model problem
(\ref{temp2}) which read as, find $u_{h}\in V^{k}_{h}$ such that 
\begin{equation}
A_{h}(u_{h},v_{h})=F_{h}(v_{h}) \quad \forall \  v_{h} \in V^{k}_{h} 
\label{temp5}
\end{equation}
where $V^{k}_{h} \subset H^{1,nc}(\tau_{h},k)$ is a global nonconforming virtual finite 
element space.

We will refer local virtual element space by $V^{k}_{h}(T)$. For 
$k\geq 1$ the finite dimensional  space $V^{k}_{h}(T)$ associated to the polygon $T$ is
given by
\begin{equation}
V^{k}_{h}(T):= \left \{ v\in H^{1}(T):\frac{\partial v}{\partial\mathbf{n}} \in \mathbb{P}^{k-1}(e)\ \forall \ e \subset \partial T,\ \Delta v \in \mathbb{P}^{k-2}(T) \right \}
\label{temp6}
\end{equation}

For each polygon $T$, the dimension of $V^{k}_{h}(T)$ is given by
\begin{center}
$nk+(k-1)k/2 $ \ where \  $T \subset \Omega \subset \mathbb{R}^{2}$
\end{center}

\begin{center}
$nk(k+1)/2+(k-1)k(k+1)/6$ \ where \  $T \subset \Omega \subset \mathbb{R}^{3}$
\end{center}

Now we shall introduce degrees of freedom

For $l\geq 0$, $M^{l}(e)$ and $ M^{l}(T)$ respectively denote the set of scaled monomials on $e$ and $T$
\begin{equation}
M^{l}(e)=\left\{ \left(\frac{x-x_{e}}{h_{e}} \right)^{s}, \vert s \vert \leq l \right \} \nonumber
\end{equation}
and 
\begin{equation}
M^{l}(T)= \left\{ \left(\frac{x-x_{T}}{h_{T}} \right)^{s}, \vert s \vert \leq l \right\} \nonumber
\end{equation}
 Using scaled monomial as basis function we define degrees of freedom
 
 (L1) all the moments of $v_{h}$ of order upto $k-1$ on each edge/face $e \subset \partial T$
\begin{equation} 
 \mu^{k-1}_{e}(v_{h})=\left\{ \frac{1}{|e|} \int_{e} v_{h} \, m \, ds, \ \forall m \in M^{k-1}(e) \right\} 
 \label{temp7}
\end{equation}
 
 (L2) all the moments of $v_{h}$ of order upto $(k-2)$ on $T$	
\begin{equation}
\mu^{k-2}_{T}(v_{h})=\left \{ \frac{1}{|T|} \int_{T}v_{h} \, m \, dT, \quad \forall \ m\in M^{k-2}(T) \right \} 
\label{temp8}
\end{equation}

The degrees of freedom(\ref{temp7})-(\ref{temp8}) are unisolvent for $V^{k}_{h}(T)$, which 
is proved in detail in \cite{de2014nonconforming}.

\subsection{Global nonconforming virtual element space $V^{k}_{h}$ \label{sec4}}
 We have defined local nonconforming virtual element space for each element $T \in \tau_{h}$.The global nonconforming virtual element space $V^{k}_{h}$ of order $k$ is given by 
 
\begin{center}
$n_{edg} \, k+n_{ele} \, (k-1) \, k/2$ \ for \ $d=2$
\end{center} 

\begin{center}
$n_{faces} \, k \, (k+1)/2+n_{ele} \, (k-1) \, k \, (k+1)/6$ \ for \ $d=3$ 
\end{center}
where $n_{edg}$ and $n_{faces}$ denote total no of edges ($d=2$) and faces($d=3$) respectively.\ $n_{ele}$ denotes total no of elements in $\tau_{h}$
 \begin{equation}
 V^{k}_{h}=\{ v_{h} \in H^{1,nc}(\tau_{h};k):v_{h}|_{T} \in V^{k}_{h}(T) \  \forall \ T \in \tau_{h} \} \nonumber
\end{equation}
 
\subsection{Interpolation operator \label{sec5}}
Assuming assumption (Z1)-(Z3), there exists a local polynomial approximation 
$u_{\Pi} \in \mathbf{P}^{k}(T)$, which satisfies the following approximation property 
\begin{equation}
\| u-u_{\Pi} \|_{0,T}+h_{T} \, |u-u_{\Pi}|_{1,T} \leq C \, h^{s}_{T} \, |u|_{s,T} 
\label{aprox1} 
\end{equation}  
where   $u \in H^{s}(T)$, $2\leq s\leq k+1$, and $C$ is a positive constant independent of $h_{T}$ and  depends on regularity constant $\rho$.

In standard finite element method literature we have seen interpolation operator 
depending on polynomial basis function rigorously. But in the case of virtual element theory 
we do not have explicit knowledge of the basis function. In this paragraph we define an 
interpolation operator on $V^{k}_{h}$ having optimal approximation properties using 
degrees of freedom only without explicit knowledge of basis function. Since 
detail construction of interpolation operator has been shown in \cite{de2014nonconforming,ciarlet1991basic}, we state 
here only the result.

For every $v\in H^{s}(T)$, there exists an unique interpolant $v_{I}\in V^{k}_{h}$ 
satisfies 
\begin{equation}
\|v-v_{I} \|_{0,T}+h_{T} \, \|v-v_{I}\|_{1,T} \leq C \, h_{T}^{s} \, \|v\|_{s,T} 
\label{aprox2}
\end{equation}
where $2\leq s\leq k+1$ and $T\in \tau_{h}$ be an arbitrary element.     
 
 \subsection{Construction of $A_{h} $ \label{sec6}}
In this section we explicitly describe the discrete bilinear form $A_{h}(u_{h},v_{h})$ 
and right hand side function $F_{h}$. As we mentioned earlier if the model 
problem (\ref{temp2}) is convection dominated then we need to add additional diffusion 
in streamline direction. In the virtual element formulation we rewrite the discrete bilinear 
form into two parts one is polynomial part constructed by various type of projection 
operators like elliptic projection operator $\Pi^{\nabla}_{k}:V^{k}_{h} \rightarrow \mathbb{P}^{k}(T)$ 
or $L^{2}$ projection $\Pi_{k}:V^{k}_{h}\rightarrow \mathbb{P}^{k}(T)$ and another is 
stabilization part which is responsible to stabilize the bilinear form. We have 
added additional term $\int_{T}(\vec{b} \cdot \nabla u)(\vec{b} \cdot \nabla v) \, dT$ 
to the bilinear form $A^{T}_{h}(u,v)$ to reduce the oscillation. In virtual element 
formulation we will reveal the stabilization part into two parts one is polynomial part 
and another is stabilization part, same as what we do for diffusion or reaction part. The 
present framework is stable for very small value of $\epsilon$. Convergence analysis
independent of $\epsilon$ will be shown in section \ref{sec14}.
 
Let us write the discrete bilinear form $A_{h}(u_{h},v_{h})$ as
\begin{equation}
 A_{h}(u_{h},v_{h})=\sum_{T \in \tau_{h}} A^{T}_{h}(u_{h},v_{h}) \ \forall \ u_{h},v_{h} \in V^{k}_{h}
\end{equation}   

$A^{T}_{h}:V^{k}_{h}\times V^{k}_{h}\rightarrow \mathbb{R}$ denoting the restriction of $A_{h}(u_{h},v_{h})$ to the local space $V^{k}_{h}(T)$.

The bilinear form $A^{T}_{h}$ can be decomposed as
\begin{eqnarray}
A^{T}_{h}(u_{h},v_{h})&:=&a^{T}_{h}(u_{h},v_{h})+b^{T}_{h}(u_{h},v_{h}) \nonumber \\
 &+&c^{T}_{h}(u_{h},v_{h})+b^{T,stab}_{h}(u_{h},v_{h})
\label{new1}
\end{eqnarray}

where
\begin{equation}
a^{T}_{h}(u_{h},v_{h}):=\int_{T} \epsilon \, \Pi_{k-1}(\nabla u_{h}) \cdot \Pi_{k-1}(\nabla v_{h}) +s^{T}_{a}((I-\Pi_{k})u_{h},(I-\Pi_{k})v_{h})
\end{equation}

\begin{equation}
b^{T}_{h} (u_{h},v_{h}):= -b^{T,sym}_{h}(u_{h},v_{h})+b^{T,skew}_{h}(u_{h},v_{h}) \nonumber
\end{equation}
symmetric part is defined as

\begin{equation}
b^{T,sym}_{h} (u_{h},v_{h}):=\frac{1}{2} \int_{T}(\nabla \cdot \vec{b}) \, \Pi_{k}(u_{h}) \, \Pi_{k}(v_{h}) \, dT+s^{T,sym}((I-\Pi_{k})u_{h},(I-\Pi_{k})v_{h}) \, dT \nonumber
\end{equation}

skew-symmetric part of convection term is defined as
\begin{equation}
b^{T,skew}_{h}(u_{h},v_{h}):=\frac{1}{2} \left[\int_{T}\vec{b} \cdot \Pi_{k-1}(\nabla u_{h}) \, \Pi_{k}(v_{h}) \, dT -\int_{T} \vec{b} \cdot \Pi_{k-1}(\nabla v_{h})\Pi_{k}(u_{h}) \, dT \right] 
\end{equation}

\begin{equation}
c^{T}_{h}(u_{h},v_{h}) := \int_{T}c \, \Pi_{k}(u_{h}) \, \Pi_{k}(v_{h}) \, dT 
                        + s^{T}_{c}((I-\Pi_{k})u_{h},(I-\Pi_{k})v_{h}) \nonumber
\end{equation}

stabilization term
\begin{eqnarray}
b^{T,stab}_{h}(u_{h},v_{h})&:=& \int_{T}(-\epsilon \, \Pi_{k-2}(\Delta u)+c \, \Pi_{k}(u))\ \delta_{T}(\vec{b} \cdot \Pi_{k-1}(\nabla v)) \ dT \nonumber \\
                           & + & \int_{T}(\vec{b} \cdot \Pi_{k-1}(\nabla u))(\vec{b} \cdot \Pi_{k-1}(\nabla v))\ dT \nonumber \\
                           &+&s^{T,stab}_{b}((I-\Pi_{k})u_{h},(I-\Pi_{k})v_{h}) 
\end{eqnarray}
$ s^{T}_{a}, s^{T}_{c},s^{T,stab}_{b},s^{T,sym} $ are the stabilization terms. These 
terms are symmetric and vanish on the polynomial space $\mathbb{P}_{k}(T)$.

\section{Discrete stability \label{sec7}}
Our model problem (\ref{temp2}) contains three parts diffusion, reaction, convection. Weak 
formulation of diffusion and reaction parts are symmetric hence stability analysis of these 
two parts follows same as discussed in the literature of 
VEM \cite{beirao2013basic,brezzi2013virtual,de2014nonconforming}, convection part is not 
symmetric which does not follow same stability analysis as diffusion and reaction parts. We 
need some extra effort to stabilize convection part. In discrete formulation 
$A^{T}_{h}(u_{h},v_{h})$,  we add additional term 
$\int_{T} (\vec{b} \cdot \nabla u_{h}) \, (\vec{b} \cdot \nabla v_{h})$ to capture non physical 
oscillation produced by convection term $\vec{b} \cdot \nabla u$ in convection dominated 
region $(\epsilon <<1)$. Fortunately 
$\int_{T} (\vec{b} \cdot \nabla u_{h}) \, (\vec{b} \cdot \nabla v_{h})$ is symmetric and we can 
stabilize the term using same technique as diffusion and reaction parts, which prevent 
drastic change of $\vec{b}.\nabla u_{h}$ in convection dominated region. Finally 
we can conclude that there exist positive 
constants $\alpha_{\ast}, \alpha^{\ast}, \gamma_{\ast},\gamma^{\ast}, s_{\ast},s^{\ast}, \Gamma_{\ast}, \Gamma^{\ast} $ such that 
\begin{equation}
\alpha_{\ast} \, a^{T}(v_{h},v_{h}) \leq a^{T}_{h}(v_{h},v_{h}) \leq \alpha^{\ast} \, a^{T} (v_{h},v_{h}) 
\label{stbl1} 
\end{equation}  

\begin{equation}
\gamma_{\ast} \, c^{T}(v_{h},v_{h}) \leq c^{T}_{h}(v_{h},v_{h}) \leq \gamma^{\ast} \, c^{T}(v_{h},v_{h})
\label{stbl2}
\end{equation}

\begin{equation}
\Gamma_{\ast} \, G^{T}(v_{h},v_{h}) \leq G^{T}_{h}(v_{h},v_{h}) \leq \Gamma^{\ast} \, G^{T}(v_{h},v_{h})
\label{stbl3}
\end{equation} 

\begin{equation}
s_{\ast} \, b^{T,sym}(v_{h},v_{h}) \leq b^{T,sym}_{h}(v_{h},v_{h}) \leq s^{\ast} b^{T,sym}(v_{h},v_{h})
\label{stbl4} 
\end{equation}
for all $v_{h}\in V^{k}_{h}(T)$ where $T$ is an arbitrary element and $G^{T}(\cdot,\cdot), G^{T}_{h}(\cdot,\cdot)$ 
are defined by
\begin{eqnarray}
 G^{T}(u_{h},v_{h})&:=&\int_{T} \delta_{T} \, (\vec{b} \cdot \nabla u_{h})\, (\vec{b} \cdot \nabla v_{h}) \nonumber \\
G^{T}_{h}(u_{h},v_{h})&:=&\int_{T} \delta_{T} \, \vec{b} \cdot \Pi_{k-1}(\nabla u_{h}) \ \vec{b} \cdot \Pi_{k-1}(\nabla v_{h}) \nonumber \\
&+& s^{T,stab}_{b}((I-\Pi_{k})u_{h}, (I-\Pi_{k})v_{h})
\end{eqnarray}

\subsection{Polynomial consistency \label{sec8}}
\begin{lem}
Let $u_{h}|_{T} \in \mathbb{P}^{k}(T)$ and $v_{h}|_{T}\in H^{2}(T)$. Then the bilinear 
form $A_{h}^{T}(u_{h},v_{h})$ defined in (\ref{new1}) satisfies polynomial consistency 
property, i.e. $ A_{h}^{T}(u_{h},v_{h})=A^{T}(u_{h},v_{h})$ for all $h>0$ and for all 
$T \in \tau_{h}$.
\label{pc1}
\end{lem}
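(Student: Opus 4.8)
The plan is to verify, term by term in the decomposition \eqref{new1}, that each discrete elemental form collapses to its continuous counterpart when the first argument is a polynomial $u_h|_T \in \mathbb{P}^k(T)$. The essential mechanism is twofold: first, every stabilization term $s^T_a$, $s^T_c$, $s^{T,sym}$, $s^{T,stab}_b$ is assumed to vanish on $\mathbb{P}^k(T)$, so since $(I-\Pi_k)u_h = 0$ whenever $u_h$ is already a polynomial of degree $\le k$, all stabilization contributions drop out identically; second, the various projectors act as the identity on the appropriate polynomial spaces, i.e. $\Pi_k u_h = u_h$, $\Pi_{k-1}(\nabla u_h) = \nabla u_h$ (because $\nabla u_h \in \mathbb{P}^{k-1}(T)$), and $\Pi_{k-2}(\Delta u_h) = \Delta u_h$ (because $\Delta u_h \in \mathbb{P}^{k-2}(T)$).

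I would proceed as follows. For the diffusion term, $a^T_h(u_h,v_h) = \int_T \epsilon\, \Pi_{k-1}(\nabla u_h)\cdot \Pi_{k-1}(\nabla v_h) + 0$; using $\Pi_{k-1}(\nabla u_h) = \nabla u_h$ and the defining property of the $L^2$ projector, $\int_T \nabla u_h \cdot \Pi_{k-1}(\nabla v_h) = \int_T \nabla u_h \cdot \nabla v_h$, which recovers $a^T(u_h,v_h)$. The reaction term is handled identically: $c^T_h(u_h,v_h) = \int_T c\, \Pi_k(u_h)\Pi_k(v_h) = \int_T c\, u_h\, \Pi_k(v_h) = \int_T c\, u_h\, v_h = c^T(u_h,v_h)$, where the middle equality uses $\Pi_k u_h = u_h$ and the last uses $c\,u_h \in \mathbb{P}^{?}$... here one must be slightly careful since $c$ need not be polynomial; but the projector property $\int_T w\,\Pi_k(v_h) = \int_T \Pi_k(w)\,v_h$ together with $\Pi_k(c u_h)$ does not obviously equal $c u_h$. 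The cleaner route is to observe $\int_T c\,\Pi_k(u_h)\,\Pi_k(v_h) = \int_T c\,u_h\,\Pi_k(v_h)$ and then note the remaining discrepancy $\int_T c\,u_h\,(v_h - \Pi_k v_h)$ need not vanish — so in fact the consistency must be read in the sense that $c$ is treated as a piecewise constant (or the projector is weighted), a point the author presumably addresses; I would flag this and otherwise mirror the symmetric argument. The convective forms $b^{T,sym}_h$, $b^{T,skew}_h$, and the stabilization form $b^{T,stab}_h$ are treated the same way: replace each projector of $u_h$-derivatives by the derivative itself, discard the stabilizer, and collect terms to match $b^{T,sym}$, $b^{T,skew}$, $G^T$, and the consistency form. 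Summing the four reconstructed pieces yields $A^T(u_h,v_h)$.

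The main obstacle — and the only genuinely delicate point — is handling the non-polynomial coefficients $\vec{b}$ and $c$ in the zeroth-order and convective terms: whenever a coefficient multiplies a projected quantity against an unprojected one, the $L^2$-orthogonality of $\Pi_k$ does not immediately transfer the projection across the coefficient. The resolution is to invoke that the continuous form $A^T$ in the decomposed formulation is itself defined with the coefficients evaluated against projected arguments (or, equivalently, that polynomial consistency is asserted modulo the standard assumption that coefficients are taken piecewise-polynomial / piecewise-constant in the discrete form, as is conventional in SDFEM-type VEM), so that the matching is exact by construction. Once that convention is fixed, each of the remaining manipulations is a one-line application of $\Pi_k u_h = u_h$ (degree $\le k$), $\Pi_{k-1}\nabla u_h = \nabla u_h$, $\Pi_{k-2}\Delta u_h = \Delta u_h$, and the vanishing of all stabilizers on $\mathbb{P}^k(T)$; the hypothesis $v_h|_T \in H^2(T)$ is exactly what is needed for $\Delta$, the streamline derivative, and all the projectors applied to $v_h$ to be well defined. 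I would then conclude $A^T_h(u_h,v_h) = A^T(u_h,v_h)$ for every $h>0$ and every $T \in \tau_h$, which is the claim of Lemma \ref{pc1}.
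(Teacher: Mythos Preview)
Your overall structure is right and your diffusion step matches the paper exactly. You also correctly locate the real difficulty: once a non-polynomial coefficient $\vec b$ or $c$ sits between a projected and an unprojected factor, $L^2$-orthogonality no longer transfers the projector across. However, your proposed resolution --- assume the coefficients are piecewise polynomial so that the matching is ``exact by construction'' --- is \emph{not} what the paper does, and it misreads the role of the $H^2(T)$ hypothesis.

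The paper's consistency is in fact only \emph{approximate}. For the offending terms, e.g.\ $\int_T \vec b\cdot\Pi_{k-1}(\nabla v_h)\,p$ in $b^{T,skew}_h$ and $\int_T c\,p\,\delta_T\,\vec b\cdot\Pi_{k-1}(\nabla v_h)$ in $b^{T,stab}_h$, the paper writes $\Pi_{k-1}(\nabla v_h)=\nabla v_h+(\Pi_{k-1}(\nabla v_h)-\nabla v_h)$ and bounds the residual by
\[
\big|{\textstyle\int_T}(\vec b\cdot(\Pi_{k-1}(\nabla v_h)-\nabla v_h))\,p\big|\le C\,\|\vec b\|_{\infty,T}\,\|p\|_{0,T}\,h_T\,|\nabla v_h|_{1,T},
\]
using the projection approximation estimate $\|\Pi_{k-1}(\nabla v_h)-\nabla v_h\|_{0,T}\le C h_T|\nabla v_h|_{1,T}$. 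This is precisely where $v_h|_T\in H^2(T)$ enters: it guarantees $|\nabla v_h|_{1,T}<\infty$ so the bound is meaningful, not merely that the operators are well defined. The paper then declares the identity to hold ``$\approx$'' for small $h_T$ and makes this explicit in the Remark following the lemma. So the statement $A_h^T(u_h,v_h)=A^T(u_h,v_h)$ is being asserted up to an $O(h_T)$ discrepancy, not via any piecewise-constant assumption on $\vec b,c$. Your write-up should replace the speculative coefficient convention with this approximation argument and state clearly that the consistency is asymptotic in $h_T$.
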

 
\begin{proof}
 
If $u_{h}$ or $v_{h}$, or both are polynomial of degrees $k$, then the stabilization 
terms $s^{T}_{a},s^{T}_{c},s^{T,stab}_{b}$ vanish.Now we will prove the following   
 
\begin{eqnarray}
a^{T}_{h}(u_{h},v_{h}) &=& a^{T}(u_{h},v_{h}) \nonumber\\
b^{T,skew}_{h}(u_{h},v_{h}) &=& b^{T,skew}(u_{h},v_{h}) \nonumber \\
b^{T,sym}_{h}(u_{h},v_{h}) &=& b^{T,sym}(u_{h},v_{h}) \nonumber \\
c^{T}_{h}(u_{h},v_{h}) &=& c^{T}(u_{h},v_{h}) \nonumber \\
b^{T,stab}_{h}(u_{h},v_{h}) &=& b^{T,stab}(u_{h},v_{h}) \nonumber
\end{eqnarray}

$L^{2}$ projection $ \Pi_{k}$ is invariant on polynomial space $\mathbb{P}^{k}(T)$, i.e., $\Pi_{k}(p)=p$, where $p\in \mathbb{P}^{k}$. Using orthogonality property of $L^{2}$ projection we can estimate required polynomial consistency property   

\begin{eqnarray}
a^{T}_{h}(p,v_{h})&=& \int_{T} \epsilon \, \nabla p \cdot \Pi_{k-1}(\nabla v_{h}) \, dT \nonumber \\
                  &=& \int_{T} (\Pi_{k-1}(\nabla v_{h})-\nabla v_{h}) \, \epsilon \, \nabla p \, dT \nonumber \\
                  &+& \int_{T} \epsilon \, \nabla p \cdot \nabla v_{h} \, dT \nonumber \\
                  &=& \int_{T} \epsilon \, \nabla p \, \nabla v_{h} \, dT \nonumber \\
                  &=& a^{T}(p,v_{h})  
\end{eqnarray}

$b^{T}_{h}(p,v_{h}) $ contains three parts. Since $ \nabla p$ is a polynomial of 
degrees $k-1$  less than deg$(\Pi_{k-1}(v_{h}))$, using orthogonality property of $L^{2}$ 
operator we estimate polynomial consistency of first term
\begin{eqnarray}
b^{T,skew}_{h}(p,v_{h}) &=&\frac{1}{2} \left[\int_{T} (\vec{b} \cdot \nabla p) \, \Pi_{k}(v_{h}) \, dT-\int_{T} \vec{b} \cdot \Pi_{k-1}(\nabla v_{h}) \,p \, dT \right]\nonumber \\
\label{pc2}
\end{eqnarray}

\begin{eqnarray}
\int_{T} \vec{b} \cdot \nabla p\ \Pi_{k}(v_{h}) \, dT &=& \int_{T} \vec{b} \cdot \nabla p \, (\Pi_{k}(v_{h})-v_{h}) \, dT \nonumber\\ 
                                             &+&\int_{T} \vec{b} \cdot \nabla p \, v_{h} dT\nonumber\\
                                             &=&\int_{T}\vec{b} \cdot \nabla p \, v_{h} \ dT
\end{eqnarray}
$\Pi_{k-1}(\nabla v_{h})$ is a polynomial of degree $k-1$, hence we cannot apply 
orthogonality property of $L^{2}$ projection. We use polynomial approximation 
property of $\nabla v_{h}$  and $H^{2}(T)$ regularity of $v_{h}$ to establish the 
following result,

\begin{eqnarray}
\int_{T} \vec{b} \, \Pi_{k-1}(\nabla v_{h}) \, p &=& \int (\Pi_{k-1}(\nabla v_{h})-\nabla v_{h}) \, \vec{b} \, p \nonumber \\
                                                      & + &\int_{T} \vec{b} \cdot \nabla v_{h} \, p \nonumber \\
                                                      & \leq & \|\vec{b}\|_{\infty,T} \, \| p \|_{0,T} \, \|\nabla v_{h}-\Pi_{k-1}(\nabla v_{h}) \| +\int_{T} \vec{b} \, \nabla v_{h} \, p \nonumber \\
                                                      & \leq & C \| \vec{b} \|_{\infty,T} \, h_{T} \, \vert \nabla v_{h} \vert_{1,T} \| p \|_{0,T} \nonumber \\
                                                      & +& \int_{T} \vec{b} \cdot \nabla v_{h} \, p \nonumber \\
                                                      & \approx & \int_{T} \vec{b} \cdot \nabla v_{h} \, p                                                     
                                                      \label{tempc3}
\end{eqnarray}
for very small values of $h_{T}$.

Again using orthogonality property of $ L^{2}$ projection we establish 
\begin{eqnarray}
b^{T,sym}_{h}(p,v_{h})&=& \frac{1}{2}\int_{T} (\nabla \cdot \vec{b}) \, p \, \Pi_{k}(v_{h})\ dT \nonumber \\ 
&=& \frac{1}{2} \int_{T} (\nabla \cdot \vec{b})(\Pi_{k}(v_{h})-v_{h})p + \frac{1}{2} \int_{T} (\nabla \cdot \vec{b}) \, p \, v_{h} \nonumber\\
                                                          &=& \frac{1}{2}\int_{T} (\nabla \cdot \vec{b}) \, p \, v_{h} 
                                                          \label{tempc4}
\end{eqnarray}

and 
\begin{equation}
c^{T}_{h}(p,v_{h})=c^{T}(p,v_{h})
\label{tempc5}
\end{equation}

Now we will discuss polynomial consistency of additional terms. $\Delta p$ is a polynomial 
of degree $k-2$ less than deg $(\Pi_{k-1}(\nabla v)) $, hence we use orthogonality property of $L^{2}$ function to establish the following results.  

\begin{eqnarray}
b^{T,stab}_{h}(u_{h},v_{h}) &=& \int_{T}(-\epsilon \, \Delta p+c \, p) \, \delta_{T} \, (\vec{b} \cdot \Pi_{k-1}(\nabla v_{h})) \, dT \nonumber\\
                            &+& \int_{T} \delta_{T} \, (\vec{b} \cdot \nabla p) \, (\vec{b} \cdot \Pi_{k-1}(\nabla v_{h})) \, dT 
\end{eqnarray}

\begin{eqnarray}
\int_{T} -\epsilon \, \Delta p \, \delta_{T} (\vec{b} \cdot \Pi_{k-1}(\nabla v_{h})) \, dT & = &  \delta_{T} \int_{T} -\epsilon \, \Delta p \, (\vec{b} \cdot \Pi_{k-1}(\nabla v_{h})) \, dT \nonumber \\
&=& \delta_{T} \int_{T}-\epsilon \, \Delta p \, (\vec{b} \cdot \Pi_{k-1}(\nabla v_{h})-\vec{b} \cdot \nabla v_{h}) \, dT \nonumber \\
&+& \delta_{T} \int_{T}-\epsilon  \, \Delta p \, (\vec{b} \cdot \nabla v_{h}) \, dT \nonumber \\
&=& \delta_{T} \int_{T}-\epsilon  \, \Delta p \, (\vec{b} \cdot \nabla v_{h}) \, dT
\end{eqnarray}

Using same technique as (\ref{tempc3}) we establish the following result

\begin{eqnarray}
\int_{T} c \, p \, \delta_{T} \, (\vec{b} \cdot \Pi_{k-1}(\nabla v_{h})) &=& \delta_{T} \int_{T} c \, p \, (\vec{b} \cdot \Pi_{k-1}(\nabla v_{h})-\vec{b} \cdot \nabla v_{h}) \nonumber \\
&+&\delta_{T}\int_{T} c \, p \, \vec{b} \cdot \nabla v_{h}  \nonumber \\
 &\leq & \delta_{T} \, c_{\text{max}} \, C \, \|p\|_{0,T} \, \|\vec{b}\|_{0,\infty} \, h_{T} \, |\nabla v_{h} |_{1,T} \nonumber \\
 &+& \delta_{T} \int_{T} c \, p \, \vec{b} \cdot \nabla v_{h} \nonumber \\
 & \approx & \delta_{T} \int_{T} c \, p \, \vec{b} \cdot \nabla v_{h}
 \label{new_aprox}
\end{eqnarray}
for very small values of $h_{T}$.

Again using  orthogonality property of $L^{2}$ function we derive the following result. 

\begin{eqnarray}
\int_{T}(\vec{b} \cdot \nabla p) \, \delta_{T} \, (\vec{b} \cdot \Pi_{k-1}(\nabla v_{h})) \, dT &=& \delta_{T} \, \int_{T}(\vec{b} \cdot \nabla p)(\vec{b} \cdot \Pi_{k-1}(\nabla v_{h})-\vec{b} \cdot \nabla v_{h}) \, dT \nonumber \\ 
& + & \delta_{T} \int_{T} (\vec{b} \cdot \nabla p)(\vec{b} \cdot \nabla v_{h} ) \, dT \nonumber \\
&=& \delta_{T} \int_{T}(\vec{b} \cdot \nabla p)(\vec{b} \cdot \nabla v_{h} ) \, dT
\end{eqnarray}

\end{proof}
\subsection{Remark:}
In estimation (\ref{tempc3}) we have approximated the 
term $\int_{T}\vec{b} \, \Pi_{k-1}(\nabla v_{h}) \, p $ by $\int_{T} \vec{b} \cdot \nabla v_{h} \, p$. Hence 
corresponding error is 
$\bigg|\int_{T}\vec{b} \, \Pi_{k-1}(\nabla v_{h}) \, p -\int_{T} \vec{b} \cdot \nabla v_{h} \, p \bigg| \leq C \, \|\vec{b}\|_{\infty,T} \,  h_{T} \,  |\nabla v_{h} |_{1,T} \  \|p \|_{0,T} $. Since we 
have assumed that $v_{h}|_{T} \in H^{2}(T) $, $  |\nabla v_{h}|_{1,T} $ and other terms are 
well defined. Therefore making mesh diameter $h_{T}$ sufficiently small we can reduce the 
error less than any positive quantity $\epsilon_{1}$ which is  different from the
diffusion coefficient $\epsilon$. Hence for sufficiently small size of diameter of mesh 
element $T$ the error is negligible. In estimation (\ref{new_aprox}) we have followed 
same methodology.

\section{Discrete coercivity \label{sec9}}
Now we discuss the coerciveness of $A_{h}(u_{h},v_{h})$ on $V^{k}_{h}$. We assume the 
assumptions (Z1)-(Z3) and (\ref{asmp1}) hold then there exist 
constants $\mu_{1}$ and $\mu_{2}$ independent of $T \in \tau_{h}$ such that the following 
local inverse inequality hold

\begin{equation}
\| \Delta v_{h} \|_{0,T} \leq \mu_{1} \, h^{-1}_{T} \, |v_{h}|_{1,T} \quad \forall \ v_{h}\in V_{h}^{k}, T \in \tau_{h}
\label{dis_coer5}
\end{equation}

\begin{equation}
|v_{h}|_{1,T} \leq \mu_{2} \, h^{-1}_{T} \, \|v_{h}\|_{0,T} \quad \forall v_{h}\in V^{k}_{h},  T \in \tau_{h}
\label{dis_coer6}
\end{equation}
Let us introduce domain dependent norm 

\begin{equation}
|||v_{h}|||:=\sum_{T} \left\{ \epsilon \, |v_{h}|^{2}_{1,T}+c_{0} \, \|v_{h}\|^{2}_{0,T}+\delta_{T} \, \|\vec{b} \cdot \nabla v_{h} \|^{2}_{0,T} \right\} 
\label{dis_coer1}
\end{equation}

We assume the  following assumption on control parameter $\delta_{T}$ 
\begin{equation}
0<\delta_{T} \leq \text{min} \left\{ \frac{c_{0}\ \text{min}\{s^{\ast},\gamma_{\ast}\}}{4 \, c^{2}_{\text{max}}}, \frac{h^{2}_{T} \, \alpha_{\ast}}{2 \, \epsilon \, \mu^{2}_{1}},\text{min} \left\{1,\frac{1}{c_{I}} \right\} \frac{c_{0} \, \text{min}\left\{s^{\ast},\gamma_{\ast}\right\} h_{T}^{2}}{4 \, \| \vec{b}\|^{2}_{0,\infty} \, \mu^{2}_{2}} \right \} 
\label{dis_coer7}
\end{equation}

\begin{lem}
\label{dis_coer2}
Let the virtual element space $V^{k}_{h}$ satisfies the assumptions $(Z1), (Z2), (Z3) \& (\ref{asmp1})$. Let 
the virtual element space satisfies the condition (\ref{aprox2}). Then the discrete 
bilinear form $A_{h}(u_{h},v_{h})$ is coercive on $V^{k}_{h}$, i.e. 
\begin{center}
$A_{h}(v_{h},v_{h}) \geq \alpha |||v_{h}|||^{2}$
\end{center} 
where $\alpha$ is a positive constant.
\end{lem}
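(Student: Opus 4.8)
The plan is to expand $A_h(v_h,v_h)=\sum_{T\in\tau_h}A_h^T(v_h,v_h)$ through the decomposition (\ref{new1}) and estimate each contribution against the three pieces of the mesh-dependent norm (\ref{dis_coer1}). The first observation is that the skew-symmetric convective term disappears, $b_h^{T,skew}(v_h,v_h)=0$, so that
\begin{equation}
A_h^T(v_h,v_h)=a_h^T(v_h,v_h)-b_h^{T,sym}(v_h,v_h)+c_h^T(v_h,v_h)+b_h^{T,stab}(v_h,v_h). \nonumber
\end{equation}
Splitting $b_h^{T,stab}(v_h,v_h)=R^T(v_h)+G_h^T(v_h,v_h)$ with $R^T(v_h):=\int_T(-\epsilon\,\Pi_{k-2}(\Delta v_h)+c\,\Pi_k(v_h))\,\delta_T\,(\vec b\cdot\Pi_{k-1}(\nabla v_h))\,dT$, the only sign-indefinite piece is $R^T(v_h)$; the proof then reduces to bounding the symmetric terms $a_h^T$, $c_h^T-b_h^{T,sym}$ and $G_h^T$ from below, and absorbing $R^T(v_h)$ into them.

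For the symmetric terms I would invoke the stability estimates (\ref{stbl1})--(\ref{stbl4}). From (\ref{stbl1}), $a_h^T(v_h,v_h)\ge\alpha_\ast\,\epsilon\,|v_h|_{1,T}^2$; from (\ref{stbl3}), $G_h^T(v_h,v_h)\ge\Gamma_\ast\,\delta_T\,\|\vec b\cdot\nabla v_h\|_{0,T}^2$. For the reaction block I would combine (\ref{stbl2}) and (\ref{stbl4}): since the polynomial parts of $c_h^T$ and $b_h^{T,sym}$ both use the projection $\Pi_k$, one gets $c_h^T(v_h,v_h)-b_h^{T,sym}(v_h,v_h)\ge\min\{\gamma_\ast,s^\ast\}\int_T\big(c-\frac12\nabla\cdot\vec b\big)\Pi_k(v_h)^2\,dT$ up to a nonnegative stabilization remainder, and assumption (\ref{asmp1}) together with the usual VEM control of the projection error then yields $c_h^T(v_h,v_h)-b_h^{T,sym}(v_h,v_h)\ge\min\{\gamma_\ast,s^\ast\}\,c_0\,\|v_h\|_{0,T}^2$. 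Hence the symmetric block already dominates a fixed multiple of the norm (\ref{dis_coer1}) restricted to $T$.

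For the absorption of $R^T(v_h)$, which is the heart of the argument, I would treat its two parts separately. Using that $\Pi_k,\Pi_{k-1}$ are $L^2$-contractions, Cauchy--Schwarz gives for the reaction part $\big|\int_T c\,\Pi_k(v_h)\,\delta_T\,(\vec b\cdot\Pi_{k-1}(\nabla v_h))\,dT\big|\le\delta_T\,c_{\max}\,\|v_h\|_{0,T}\,\|\vec b\cdot\nabla v_h\|_{0,T}$, and Young's inequality splits this into $\frac14 c_0\|v_h\|_{0,T}^2$ plus a term proportional to $\delta_T^2 c_{\max}^2 c_0^{-1}\|\vec b\cdot\nabla v_h\|_{0,T}^2$, the latter being no larger than a fraction of $\Gamma_\ast\delta_T\|\vec b\cdot\nabla v_h\|_{0,T}^2$ by the first restriction in (\ref{dis_coer7}). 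For the diffusion part, the inverse inequality (\ref{dis_coer5}) replaces $\|\Delta v_h\|_{0,T}$ by $\mu_1 h_T^{-1}|v_h|_{1,T}$, and Cauchy--Schwarz plus Young yield $\frac14\alpha_\ast\epsilon|v_h|_{1,T}^2$ plus a term proportional to $\epsilon\,\delta_T^2\mu_1^2 h_T^{-2}\alpha_\ast^{-1}\|\vec b\cdot\nabla v_h\|_{0,T}^2$, absorbed by the second restriction in (\ref{dis_coer7}); the third restriction in (\ref{dis_coer7}), used in tandem with (\ref{dis_coer6}), serves to control the remaining portion of $\|\vec b\cdot\Pi_{k-1}(\nabla v_h)\|_{0,T}^2$ and the stabilization term $s^{T,stab}_b$ against $c_0\|v_h\|_{0,T}^2$. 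Collecting the per-element inequalities and summing over $\tau_h$ gives $A_h(v_h,v_h)\ge\alpha\,|||v_h|||^2$, with $\alpha$ depending only on $\alpha_\ast,\gamma_\ast,\Gamma_\ast,s^\ast,c_0,c_{\max},\|\vec b\|_{0,\infty}$ and $\rho$.

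The main obstacle I anticipate is precisely this last step: one must check that the three competing smallness conditions on $\delta_T$ in (\ref{dis_coer7}) are simultaneously strong enough to swallow the indefinite term $R^T(v_h)$ while leaving a strictly positive share of each of the three norm components --- in particular keeping the $\epsilon$-dependence balanced so the bound is uniform as $\epsilon\to0$. A subsidiary subtlety is the sign bookkeeping when combining $c_h^T$ with $-b_h^{T,sym}$: since $b^{T,sym}$ is not positive semidefinite, the stability constants $\gamma_\ast$ and $s^\ast$ cannot be applied independently and must be reconciled through assumption (\ref{asmp1}).
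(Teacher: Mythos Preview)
Your strategy matches the paper's: kill the skew part, invoke the stability bounds (\ref{stbl1})--(\ref{stbl4}) to produce a positive multiple of each piece of the local norm, and then absorb the sign-indefinite remainder $R^T(v_h)$ by Cauchy--Schwarz, Young's inequality and the three smallness conditions in (\ref{dis_coer7}). The one substantive difference is \emph{where} the by-products of Young's inequality are sent. You route the resulting $\|\vec b\cdot\nabla v_h\|_{0,T}^2$ terms into the streamline block $\Gamma_\ast\,\delta_T\|\vec b\cdot\nabla v_h\|_{0,T}^2$; the paper instead leaves $\Gamma_\ast$ untouched and absorbs \emph{every} such term into the $L^2$ block $c_0\|v_h\|_{0,T}^2$ via the inverse inequality (\ref{dis_coer6}) --- this is exactly what the third condition in (\ref{dis_coer7}) is calibrated for, which explains its dependence on $\|\vec b\|_{0,\infty}$ and $\mu_2$. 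Your route is viable in principle but would need retuned constants: with (\ref{dis_coer7}) as stated, after Young one only gets $\tfrac{\delta_T}{2}\|\vec b\cdot\nabla v_h\|_{0,T}^2$, and nothing forces $\tfrac12<\Gamma_\ast$. The paper also handles the reaction part of $R^T$ slightly differently, first splitting off the projection error $\Pi_{k-1}(\nabla v_h)-\nabla v_h$ (estimates (\ref{dis_coer8})--(\ref{dis_coer14})) before applying (\ref{dis_coer6}), which is the step where the constant $c_I$ appearing in (\ref{dis_coer7}) enters.

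One small correction: the term $s_b^{T,stab}$ is already part of $G_h^T$ and is covered by (\ref{stbl3}); it is a nonnegative contribution to coercivity, not an indefinite piece that needs to be absorbed.
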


\begin{proof}
Let us consider the bilinear form $A_{h}(\cdot,\cdot)$.

\begin{equation}
A_{h}(u_{h},v_{h})=\sum_{T} A^{T}_{h}(u_{h},v_{h}) \nonumber
\end{equation}


\begin{eqnarray}
A^{T}_{h}(v_{h},v_{h})& =& a^{T}_{h}(v_{h},v_{h}) +b^{T}_{h}(v_{h},v_{h})+c^{T}_{h} \nonumber\\
                      & +& \int_{T}[-\epsilon \, \Pi_{k-2}(\Delta v_{h})+c \, \Pi_{k}(v_{h})] \, \delta_{T} \, (\vec{b} \cdot \Pi_{k-1}(\nabla v_{h})) \nonumber \\
                      & + &\int_{T}(\vec{b} \cdot \Pi_{k-1}(\nabla v_{h})) \, \delta_{T} \, (\vec{b} \cdot \Pi_{k-1}(\nabla v_{h})) \nonumber\\
                      & + & s^{T,stab}_{h}((I-\Pi_{k})v_{h},(I-\Pi_{k})v_{h}) 
                      \label{dis_coer3}
\end{eqnarray}
Using stability property of $ a^{T}_{h}(v_{h},v_{h}), G^{T}_{h}(v_{h},v_{h})$, $c^{T}_{h}(v_{h},v_{h}) $ and $ b^{T,sym}_{h}$ and considering the
assumptions mentioned in the lemma we can write 
\begin{eqnarray}
A^{T}_{h}(v_{h},v_{h}) &\geq & \alpha_{\ast} \, a^{T}(v_{h},v_{h})+\text{min} \{s^{\ast},\gamma_{\ast} \} \, c_{0} \, \|v_{h} \|^{2}_{0,T} + \Gamma_{\ast} \, \delta_{T} \, \|\vec{b} \cdot \nabla v_{h}\|^{2}_{0,T}  \nonumber\\
                       &+&\int_{T} [-\epsilon \, \Pi_{k-2}(\Delta v_{h})+c \, \Pi_{k}(v_{h})] \, \delta_{T} (\vec{b} \cdot \Pi_{k-1}(\nabla v_{h})) 
                       \label{dis_coer4}
\end{eqnarray}

Using boundedness property of projection operator and satisfying the 
condition (\ref{dis_coer7}) we estimate the additional term.

\begin{eqnarray}
\bigg| \int_{T} -\epsilon \, \Pi_{k-2}(\Delta v_{h}) \, \delta_{T} \, (\vec{b} \cdot \Pi_{k-1}(\nabla v_{h})) \bigg| & \leq & \delta_{T} \, \epsilon \, \|\Delta v_{h} \|_{0,T} \ \|\vec{b} \cdot \nabla v_{h} \|_{0,T} \nonumber\\
& \leq & \delta_{T} \big[\frac{\epsilon^{2}}{2} \|\Delta v_{h}\|^{2}_{0,T} \nonumber \\
&+ & \frac{1}{2} \|\vec{b}.\nabla v_{h} \|^{2}_{0,T} \big]
\end{eqnarray}
using inverse inequality property of VE space (\ref{dis_coer5}) we get,

\begin{eqnarray}
\frac{\delta_{T}}{2} \, \epsilon^{2} \, \| \Delta v_{h} \|^{2}_{0,T} &\leq & \frac{\delta_{T}}{2} \, \epsilon^{2} \, \mu^{2}_{1} \, h^{-2}_{T} |v_{h} |^{2}_{1,T} \nonumber \\
&\leq & \frac{\epsilon}{4} \, \alpha_{\ast} \, |v_{h}|^{2}_{1,T}
\end{eqnarray}
using polynomial approximation property (\ref{aprox1}) of virtual element space and 
assuming control parameter $\delta_{T}$ satisfying the condition (\ref{dis_coer7}) we 
can estimate   

\begin{eqnarray}
\int_{T} c \, \Pi_{k}(v_{h}) \, \delta_{T} \, (\vec{b} \cdot \Pi_{k-1}(\nabla v_{h})) &=& \int_{T} c \, v_{h} \, \delta_{T} \, (\vec{b} \cdot \Pi_{k-1}(\nabla v_{h})) \nonumber \\
&=& \int_{T} c \, v_{h} \, \delta_{T} \, \vec{b} \cdot [\Pi_{k-1}(\nabla v_{h})-(\nabla v_{h})] \nonumber \\
&+&\int_{T}c \, v_{h} \, \delta_{T} \, (\vec{b} \cdot \nabla v_{h})
\label{dis_coer8}
\end{eqnarray}
Since $c$ is smooth enough we can bound it by $c_{\text{max}}$ 
\begin{eqnarray}
\bigg| \int_{T}c \, v_{h} \, \delta_{T} \, (\vec{b} \cdot \nabla v_{h}) \bigg| & \leq &  \delta_{T} \, \| c \, v_{h}\|_{0,T} \, \|\vec{b} \cdot \nabla v_{h} \|_{0,T} \nonumber \\
& \leq & \delta_{T} \left[\frac{1}{2} \|c \, v_{h}\|^{2}_{0,T}+ \frac{1}{2} \|\vec{b} \cdot \nabla v_{h} \|^{2}_{0,T} \right] \nonumber \\
&=& \frac{\delta_{T}}{2} \|c \, v_{h} \|^{2}_{0,T} +\frac{\delta_{T}}{2} \|\vec{b} \cdot \nabla v_{h} \|^{2}_{0,T} 
\label{dis_coer9}
\end{eqnarray}

If the control parameter $\delta_{T}$ satisfies the condition (\ref{dis_coer7}) we can estimate 
\begin{eqnarray}
\frac{\delta_{T}}{2} \|c \, v_{h} \|^{2}_{0,T} &\leq & \frac{\delta_{T}}{2} c_{\text{max}}^{2} \| v_{h} \|^{2} \nonumber \\
 &\leq & \frac{c_{0}}{8} \text{min}\{s^{\ast},\gamma_{\ast} \} \|v_{h} \|^{2}_{0,T}
 \label{dis_coer10}
\end{eqnarray}
We consider vector valued convection coefficient $\vec{b}$ is regular enough. Taking it 
outside of integration and bounding it by $L_{\infty}$ norm $|\vec{b}|_{0,\infty}$ and 
using (\ref{dis_coer6}), we can write
\begin{eqnarray}
\frac{1}{2} \, \delta_{T} \, \| \vec{b} \cdot \nabla v_{h} \|^{2}_{0,T} &\leq &  \frac{1}{2} \, \delta_{T} \, |\vec{b} |^{2}_{0,\infty} \| \nabla v_{h} \|^{2}_{0,T} \nonumber \\
& \leq & \frac{1}{2} \, \delta_{T} \, |\vec{b} |^{2}_{0,\infty} \, \mu^{2}_{2} \, h_{T}^{-2} \, \|v_{h} \|^{2}_{0,T} \nonumber \\
& \leq & \frac{c_{0}}{8} \, \text{min} \{ s^{\ast},\gamma_{\ast} \} \| v_{h}\| ^{2}_{0,T}
\label{dis_coer11}
\end{eqnarray}
 First term of (\ref{dis_coer8}) can be estimated as
\begin{eqnarray}
\bigg| \int_{T} c v_{h} \, \delta_{T} \, \vec{b} \cdot (\Pi_{k-1}(\nabla v_{h})-(\nabla v_{h})) \bigg|  & \leq &  \delta_{T} \, \|c \, v_{h} \|_{0,T} \| \vec{b} \cdot (\Pi_{k-1}(\nabla v_{h})-\nabla v_{h}) \|_{0,T}  \nonumber\\
&\leq & \delta_{T} \big[\frac{1}{2} \|c \, v_{h} \|^{2}_{0,T} \nonumber \\
& + & \frac{1}{2} \| \vec{b} \cdot (\Pi_{k-1}(\nabla v_{h})-\nabla v_{h}) \|_{0,T}^{2} \big]
\label{dis_coer12}
\end{eqnarray}

\begin{eqnarray} 
\delta_{T} \, \frac{1}{2} \, \|c \, v_{h} \|^{2}_{0,T} &\leq & \frac{c_{0}}{8} \, \text{min}\{s^{\ast},\gamma_{\ast}\} \|v_{h}\|^{2}_{0.T}
\label{dis_coer13}
\end{eqnarray}
 Using polynomial approximation property (\ref{aprox1}) and local inverse inequality (\ref{dis_coer6}) of VE space we can write
\begin{eqnarray} 
\delta_{T} \, \frac{1}{2} \| \vec{b} \cdot (\Pi_{k-1}(\nabla v_{h})-\nabla v_{h}) \|_{0,T}^{2} &\leq & \frac{1}{2} \, \delta_{T} \, \|\vec{b}\|^{2}_{0,\infty} \, c_{I} \, \| \nabla v_{h} \|^{2}_{0,T}  \nonumber \\
&\leq & \frac{1}{2} \, \delta_{T} \, \| \vec{b}\|^{2}_{0,\infty}  \, c_{I} \, \mu^{2}_{2} \, h^{-2}_{T} \|v_{h}\|^{2}_{0,T} \nonumber \\
& \leq &  \frac{1}{8} \, c_{0} \, \text{min} \{ s^{\ast},\gamma_{\ast} \} \|v_{h}\|^{2}_{0,T}
\label{dis_coer14} 
\end{eqnarray}

Finally we estimate local bilinear form $ A^{T}_{h}(v_{h},v_{h})$
\begin{eqnarray}
A^{T}_{h}(v_{h},v_{h}) & \geq & \frac{3}{4} \, \alpha_{\ast}  \, \epsilon \, |v_{h}|^{2}_{1,T} \nonumber \\
& + &  \frac{3}{8} \, \text{min} \{s^{\ast},\gamma_{\ast} \}  \, c_{0} \, \|v_{h} \|^{2}_{0,T} \nonumber \\
& + & \Gamma_{\ast} \, \delta_{T} \, \|\vec{b} \cdot \nabla v_{h} \|^{2}_{0,T} 
\label{dis_coer15}
\end{eqnarray}
 
Using local estimation(\ref{dis_coer15}) we can write
\begin{eqnarray}
A_{h}(v_{h},v_{h}) & \geq & \sum_{T} A^{T}_{h}(v_{h},v_{h}) \nonumber \\
                   & \geq & \alpha \sum_{T} \left\{ \epsilon \, |v_{h}|^{2}_{1,T} +c_{0} \, \|v_{h} \|^{2}_{0,T}+\delta_{T} \, \|\vec{b} \cdot \nabla v_{h} \|^{2}_{0,T} \right \} \nonumber \\
                   & \geq & \alpha \, |||v_{h}|||^{2}
                   \label{dis_coer16}
\end{eqnarray}
Positive constant $\alpha$ is defined by $\alpha$:= min $\{ \alpha_{1},\alpha_{2} ,\alpha_{3}\}$ where 
$\alpha_{1} =\frac{3}{4} \, \alpha_{\ast}$, $\alpha_{2}=\frac{3}{8} \, \text{min} \{s^{\ast},\gamma_{\ast} \}  $ and $\alpha_{3}=\Gamma_{\ast}.$
\end{proof}

\section{Boundedness \label{sec10}}
In this section we will bound $\displaystyle \sum_{T} A^{T}(u_{\Pi}-u,\delta)$.
Let $\tilde{u}=u_{\Pi}-u$, and $ \delta=u_{h}-u_{I}$.
\begin{lem}
\label{bd1}
Let the virtual element spaces satisfy $(Z1)-(Z3)$ and the control parameter $\delta_{T}$ fulfill the assumption(\ref{dis_coer7}) .Then the following estimation holds
\begin{equation}
\big| \sum_{T}A^{T}(\tilde{u},\delta) \big| \leq C \, h^{k} \, \left(\sum_{T} \eta \, |u|^{2}_{k+1,T} \right)^{1/2} \, ||| \delta ||| \nonumber
\end{equation} 
where $\eta$ is a positive constant.
\end{lem}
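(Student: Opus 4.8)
The plan is to split $A^{T}(\tilde{u},\delta)$ into its five constituent parts — the diffusion term $a^{T}$, the skew-symmetric convection term $b^{T,skew}$, the symmetric convection term $b^{T,sym}$, the reaction term $c^{T}$, and the streamline stabilization term $b^{T,stab}$ — and to bound each of them by an expression of the form $C\,h_{T}^{k}\,\sqrt{\eta_{T}}\,|u|_{k+1,T}\,|||\delta|||_{T}$, where $|||\delta|||_{T}^{2}:=\epsilon\,|\delta|_{1,T}^{2}+c_{0}\,\|\delta\|_{0,T}^{2}+\delta_{T}\,\|\vec{b}\cdot\nabla\delta\|_{0,T}^{2}$ is the elementwise contribution to $(\ref{dis_coer1})$ and $\eta_{T}$ is a constant assembled from $\epsilon$, $c_{0}$, $c_{\max}$, $\|\vec{b}\|_{0,\infty}$, $\|\nabla\cdot\vec{b}\|_{0,\infty}$ and the ratio $\delta_{T}/h_{T}^{2}$, all of which remain bounded under $(\ref{dis_coer7})$. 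In every term I apply the Cauchy--Schwarz inequality in $L^{2}(T)$, peeling off one factor containing only the projection error $\tilde{u}=u_{\Pi}-u$ — controlled by $(\ref{aprox1})$ with $s=k+1$, namely $\|\tilde{u}\|_{0,T}\le C h_{T}^{k+1}|u|_{k+1,T}$, $|\tilde{u}|_{1,T}\le C h_{T}^{k}|u|_{k+1,T}$, together with the natural $H^{2}$ companion estimate $|\tilde{u}|_{2,T}\le C h_{T}^{k-1}|u|_{k+1,T}$ obtained the same way — and one factor containing $\delta$, which I match against the three building blocks $\sqrt{\epsilon}\,|\delta|_{1,T}$, $\sqrt{c_{0}}\,\|\delta\|_{0,T}$, $\sqrt{\delta_{T}}\,\|\vec{b}\cdot\nabla\delta\|_{0,T}$ of $|||\delta|||_{T}$.

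Carrying this out: for the diffusion term $|a^{T}(\tilde{u},\delta)|\le(\sqrt{\epsilon}\,|\tilde{u}|_{1,T})(\sqrt{\epsilon}\,|\delta|_{1,T})\le C\sqrt{\epsilon}\,h_{T}^{k}|u|_{k+1,T}\,|||\delta|||_{T}$; for the reaction and symmetric-convection terms I bound $c$ and $\nabla\cdot\vec{b}$ in $L^{\infty}(T)$ and obtain $C\,h_{T}^{k+1}|u|_{k+1,T}\,\tfrac{1}{\sqrt{c_{0}}}|||\delta|||_{T}$, with one power of $h_{T}$ to spare; for the skew-symmetric term I treat the two halves of $b^{T,skew}(\tilde{u},\delta)$ separately, pairing $\vec{b}\cdot\nabla\tilde{u}$ with $\delta$ via $|\tilde{u}|_{1,T}$ and $\|\delta\|_{0,T}$, and pairing $\tilde{u}$ with $\vec{b}\cdot\nabla\delta$ via $\|\tilde{u}\|_{0,T}\le C h_{T}^{k+1}|u|_{k+1,T}$ and $\|\vec{b}\cdot\nabla\delta\|_{0,T}\le\delta_{T}^{-1/2}|||\delta|||_{T}$, so that the factor $h_{T}^{k+1}\delta_{T}^{-1/2}$ is $O(h_{T}^{k})$ by the bound on $\delta_{T}$ in $(\ref{dis_coer7})$. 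For the stabilization term I split $b^{T,stab}(\tilde{u},\delta)$ into its $-\epsilon\,\Delta\tilde{u}$, $\vec{b}\cdot\nabla\tilde{u}$ and $c\,\tilde{u}$ pieces, in each case extracting $\delta_{T}$ and pairing the remaining factor with $\|\vec{b}\cdot\nabla\delta\|_{0,T}\le\delta_{T}^{-1/2}|||\delta|||_{T}$; the decisive one is the $-\epsilon\,\Delta\tilde{u}$ piece, which gives $\epsilon\,\delta_{T}\,\|\Delta\tilde{u}\|_{0,T}\,\delta_{T}^{-1/2}|||\delta|||_{T}=\epsilon\,\delta_{T}^{1/2}\,\|\Delta\tilde{u}\|_{0,T}\,|||\delta|||_{T}$, and after using $\|\Delta\tilde{u}\|_{0,T}\le|\tilde{u}|_{2,T}\le C h_{T}^{k-1}|u|_{k+1,T}$ together with $\delta_{T}\le h_{T}^{2}\alpha_{\ast}/(2\epsilon\mu_{1}^{2})$ from $(\ref{dis_coer7})$ — which forces $\epsilon\,\delta_{T}^{1/2}\le C\sqrt{\epsilon}\,h_{T}$ — this piece is again $O(\sqrt{\epsilon}\,h_{T}^{k})\,|u|_{k+1,T}|||\delta|||_{T}$; the other two stabilization pieces only pick up a bounded factor $\delta_{T}^{1/2}$ and are smaller still.

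Collecting the five bounds, each element contributes at most $C\,h_{T}^{k}\sqrt{\eta_{T}}\,|u|_{k+1,T}\,|||\delta|||_{T}\le C\,h^{k}\sqrt{\eta_{T}}\,|u|_{k+1,T}\,|||\delta|||_{T}$; summing over $T\in\tau_{h}$ and applying the discrete Cauchy--Schwarz inequality to the pairs $\bigl(\sqrt{\eta_{T}}\,|u|_{k+1,T},\,|||\delta|||_{T}\bigr)$ gives $\bigl|\sum_{T}A^{T}(\tilde{u},\delta)\bigr|\le C\,h^{k}\bigl(\sum_{T}\eta\,|u|_{k+1,T}^{2}\bigr)^{1/2}|||\delta|||$ with $\eta:=\max_{T}\eta_{T}$, which is the assertion.

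The only genuinely delicate point is the bookkeeping of powers of $h_{T}$ and $\delta_{T}$ in the skew-convection and stabilization terms: wherever $\|\vec{b}\cdot\nabla\delta\|_{0,T}$ serves as one Cauchy--Schwarz factor it is weighted by $\delta_{T}^{-1/2}$, and it is exactly the upper bounds on $\delta_{T}$ recorded in $(\ref{dis_coer7})$ — together with the $H^{2}$ polynomial-approximation estimate for $\tilde{u}$ — that ensure the resulting product still carries the full factor $h^{k}$ and that $\eta$ may be taken independent of $h$. Everything else reduces to routine Cauchy--Schwarz estimates and the interpolation bounds $(\ref{aprox1})$.
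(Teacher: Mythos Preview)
Your argument is essentially correct, but it differs from the paper's in one structural point and contains one small slip in the bookkeeping.

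\textbf{The different route.} For the convection part, the paper does \emph{not} bound the two halves of $b^{T,skew}(\tilde u,\delta)$ separately. Instead it integrates by parts on each element to rewrite $b^{T}(\tilde u,\delta)+c^{T}(\tilde u,\delta)$ as
\[
\langle (c-\nabla\!\cdot\vec b)\,\tilde u,\delta\rangle_{T}-\langle \vec b\cdot\nabla\delta,\tilde u\rangle_{T}+\tfrac12\int_{\partial T}(\vec b\cdot\mathbf n)\,\tilde u\,\delta ,
\]
so that a boundary integral appears; this boundary term is then summed over $T$, turned into a sum over edges of $\int_e(\vec b\cdot\mathbf n)\,\tilde u\,[|\delta|]$, and controlled via the patch test together with edge approximation estimates, yielding the factor $h_T^{2}/\epsilon$ in the paper's $\eta=\epsilon+h_T^{2}+h_T^{2}/\delta_T+\delta_T+h_T^{2}/\epsilon$. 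Your direct Cauchy--Schwarz treatment of the two halves avoids the boundary integral and the patch test altogether; it is shorter and produces an $\eta$ without the $h_T^{2}/\epsilon$ contribution (you pick up $\|\vec b\|_{0,\infty}^{2}/c_0$ instead from the $\langle\vec b\cdot\nabla\tilde u,\delta\rangle$ half). Both approaches are valid here; the paper's route is the one that generalises more readily to the later consistency analysis, where the edge/jump machinery is unavoidable.

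\textbf{The slip.} You write that ``the factor $h_T^{k+1}\delta_T^{-1/2}$ is $O(h_T^{k})$ by the bound on $\delta_T$ in (\ref{dis_coer7})'', and earlier that $\eta_T$ involves the ratio $\delta_T/h_T^{2}$. In fact (\ref{dis_coer7}) gives only \emph{upper} bounds on $\delta_T$, so $h_T^{2}/\delta_T$ is not controlled; what you actually obtain from $\langle\vec b\cdot\nabla\delta,\tilde u\rangle$ is a factor $(h_T^{2}/\delta_T)^{1/2}$ that must be absorbed into $\eta_T$ rather than declared $O(1)$. This is exactly what the paper does (the term $h_T^{2}/\delta_T$ sits inside its $\eta$), so the final inequality you state is unaffected --- only the sentence justifying it needs correcting. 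The diffusion, reaction, symmetric-convection and stabilization estimates match the paper's (including the implicit use of $|\tilde u|_{2,T}\le Ch_T^{k-1}|u|_{k+1,T}$, which the paper also relies on in (\ref{bd8})).
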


\begin{proof}
We will bound $\displaystyle \sum_{T}A^{T}(\tilde{u},\delta)$ term-wise. Using 
Cauchy-Schwarz inequality and polynomial approximation property (\ref{aprox1}) of VE space 
we can bound,  
\begin{eqnarray}
 \sum_{T} \epsilon \int_{T} \nabla \tilde{u} \, \nabla \delta &\leq & \sum_{T} \epsilon \, \| \nabla \tilde{u} \, \|_{0,T} \| \nabla  \delta  \|_{0,T}  \nonumber\\
 & \leq & C \, \epsilon^{1/2} \left(\sum_{T} h^{2k}_{T} \, |u|^{2}_{k+1} \right)^{1/2} \left(\sum_{T} \epsilon^{1/2} \, | \delta |^{2}_{1,T} \right)^{1/2} \nonumber \\
 & \leq & C \, \epsilon^{1/2} \, h^{k} \, \left(\sum_{T} |u|^{2}_{k+1} \right)^{1/2} |||\delta |||
 \label{bd2} 
\end{eqnarray}

Using Greens theorem on each element $T$ we rewrite the bilinear form (\ref{new2}) as,
\begin{eqnarray}
&&\frac{1}{2} \left[<(\vec{b} \cdot \nabla \tilde{u}),\delta >-<(\vec{b} \cdot \nabla \delta),\tilde{u} > -<\text{div} \, b,\tilde{u} \, \delta> \right]\nonumber \\
&& = <(\vec{b} \cdot \nabla \tilde{u}),\delta >-\frac{1}{2}  \int_{\partial T} (\vec{b} \cdot \mathbf{n}) \, \tilde{u} \, \delta \nonumber  \\
&&= -<(\vec{b}.\nabla \delta),\tilde{u} >-<\text{div} \, b,\tilde{u} \, \delta> + \frac{1}{2} \int_{\partial T} (\vec{b} \cdot \mathbf{n}) \, \tilde{u} \, \delta
\label{bd3}
\end{eqnarray}

$\int_{T} c \, \tilde{u} \, \delta$ along with (\ref{bd3}) can be written as,
\begin{equation}
<(c-\text{div} \, \vec{b}),\tilde{u} \, \delta>-<(\vec{b} \cdot \nabla \delta),\tilde{u}>+\frac{1}{2} \int_{T} (\vec{b} \cdot \mathbf{n}) \, \tilde{u} \, \delta 
\label{bd4}
\end{equation}
Using Cauchy-Schwarz inequality and the assumption (\ref{asmp1}) we can estimate

\begin{eqnarray}
\bigg| \sum_{T} <(c-\text{div} \, \vec{b}),\tilde{u} \, \delta>\bigg| &\leq & C \sum_{T} \| \tilde{u} \|_{0,T} \| \, \delta \|_{0,T} \nonumber \\
&\leq & C \left(\sum_{T} h^{2k+2}_{T} \, |u|^{2}_{k+1,T} \right)^{1/2} |||\delta |||
\label{bd5}
\end{eqnarray}
using Cauchy-Schwarz inequality and local polynomial approximation (\ref{aprox1}) and 
interpolation approximation (\ref{aprox2}) of VE space we can estimate second term 
of (\ref{bd4}),

\begin{eqnarray}
\sum_{T} <(\vec{b} \cdot \nabla \delta),\tilde{u} > &\leq & \sum_{T} \delta_{T}^{1/2} \, \|\vec{b} \cdot \nabla \delta \|\  \delta_{T}^{-1/2} \| \tilde{u} \|_{0,T} \nonumber \\
 &\leq & C \left(\sum_{T} \delta^{-1}_{T} \, h^{2k+2}_{T} \, |u|^{2}_{k+1,T} \right)^{1/2} 
  \left(\sum_{T} \delta_{T} \, \|\vec{b} \cdot \nabla \delta \|^{2} \right)^{1/2} \nonumber \\
 &=& C \left(\sum_{T} \left(\frac{h^{2}_{T}}{\delta_{T}} \right) \  h^{2k}_{T} \, |u|^{2}_{k+1,T} \right)^{1/2} |||\delta |||
\label{bd6}
\end{eqnarray}
Again using Cauchy-Schwarz inequality, patch test \cite{irons1972experience} of 
$[|\delta|]$ and polyomial approximation property of VE space we  bound last term of (\ref{bd4}),
  
\begin{eqnarray}
\sum_{T} \int_{\partial T} (\vec{b}.\mathbf{n}) \ \tilde{u} \delta &=& \sum_{e} 
\int_{e} (\vec{b}.\mathbf{n})\tilde{u} [| \delta|] \nonumber \\
  &\leq & C \sum_{T} h^{k+1}_{T} |u|_{k+1,T} \ |\delta|_{1,T} \nonumber \\
   & \leq & C (\sum_{T} (\frac{h^{2}_{T}}{\epsilon}) h^{2k}_{T} |u|^{2}_{k+1})^{1/2} (\sum_{T} \epsilon |\delta|^{2}_{1,T})^{1/2} \nonumber \\
   &=& C h^{k} (\sum_{T} (\frac{h^{2}_{T}}{\epsilon}) |u|^{2}_{k+1})^{1/2} |||\delta |||  
   \label{bd7}
 \end{eqnarray}
Now we bound additional term of bilinear form $A^{T}(\tilde{u},\delta)$.Assuming (\ref{aprox1}) and considering control parameter $\delta_{T}$ satisfies the condition (\ref{dis_coer7}) we can write 
 
 \begin{eqnarray}
 && \sum_{T}  <-\epsilon \Delta \tilde{u}+\vec{b}.\nabla \tilde{u}+c \tilde{u},\delta_{T} \vec{b}.\nabla \delta >_{T}  \nonumber \\
 && \leq \sum_{T} \delta^{1/2}_{T} \|-\epsilon \Delta \tilde{u}+\vec{b}.\nabla \tilde{u} +c \tilde{u} \| \ \delta^{1/2}_{T}  \|\vec{b}.\nabla \delta \| \nonumber \\
 && \leq \sum_{T} \delta_{T}^{1/2} \{ \epsilon \| \Delta \tilde{u} \| +\|\vec{b}.\nabla \tilde{u} \| +\|c \tilde{u} \| \}  \delta^{1/2}_{T} \|\vec{b}.\nabla \delta \| \nonumber \\
 && \leq (\sum_{T} \delta_{T} \{ \epsilon \| \Delta \tilde{u} \|_{0,T}+C h^{k}_{T} |u|_{k+1,T} \}^{2} )^{1/2} \nonumber \\
 && \quad (\sum_{T} \delta_{T} \| \vec{b}.\nabla \delta \|^{2}_{0,T})^{1/2} \nonumber \\
 && C (\sum_{T} (\epsilon+\delta_{T}) h^{2k} |u |^{2}_{k+1,T})^{1/2} ||| \delta |||
 \label{bd8}
 \end{eqnarray}
 
 In last inequality of (\ref{bd8}) we  have used the assumption $\epsilon \delta_{T}<C h^{2}_{T}$, where $C$ is a constant.
 
 Therefore
 \begin{equation}
 | \sum_{T}A^{T}(u_{\Pi}-u,\delta) | \leq C h^{k} (\sum_{T} \eta |u|^{2}_{k+1,T})^{1/2} ||| \delta ||| \nonumber
 \end{equation}
 where $\eta=\epsilon+h^{2}_{T}+(\frac{h^{2}_{T}}{\delta})+\delta_{T}+(\frac{h^{2}_{T}}{\epsilon})$
 \end{proof}
 
 \subsection{boundedness of discrete bilinear form \label{sec11} }
 We will bound $ \sum_{T}A^{T}_{h}(u^{'},\delta) $, where $ u^{'}=u_{I}-u_{\Pi}$ and $\delta=u_{h}-u_{I}$
 
 \begin{lem}
 \label{bond_dis10}
 Let the virtual element spaces satisfy $(Z1)-(Z3)$ and the control parameter $\delta_{T}$ fulfill the assumption(\ref{dis_coer7}) and the bilinear form (\ref{new1}) satisfies stability condition defined in section(\ref{sec7}).Then the following estimation holds
\begin{equation}
| \sum_{T}A^{T}_{h}(u^{'},\delta) | \leq C h^{k} (\sum_{T} \zeta |u|^{2}_{k+1,T})^{1/2} ||| \delta ||| \nonumber
\end{equation} 
where $\zeta $ is a positive constant.
 \end{lem}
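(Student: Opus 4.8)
The plan is to bound $\sum_{T}A^{T}_{h}(u^{'},\delta)$ term by term, following the pattern of the proof of Lemma~\ref{bd1} for the continuous form, with two systematic modifications. First, wherever the continuous proof uses an exact derivative of $u^{'}$ or of $\delta$, I pass it through the relevant $L^{2}$--projection and invoke the $L^{2}$--stability bounds $\|\Pi_{k}w\|_{0,T}\le\|w\|_{0,T}$ and $\|\Pi_{k-1}(\nabla w)\|_{0,T}\le|w|_{1,T}$. Second, every stabilization contribution $s^{T}_{a},\,s^{T,sym},\,s^{T}_{c},\,s^{T,stab}_{b}$ is estimated by Cauchy--Schwarz for the corresponding symmetric positive form and then by the stability bounds \eqref{stbl1}--\eqref{stbl4}; for instance $s^{T}_{a}((I-\Pi_{k})u^{'},(I-\Pi_{k})\delta)\le(\alpha^{\ast}\epsilon\,|u^{'}|^{2}_{1,T})^{1/2}(\alpha^{\ast}\epsilon\,|\delta|^{2}_{1,T})^{1/2}$. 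Since $u^{'}|_{T}=(u_{I}-u_{\Pi})|_{T}\in V^{k}_{h}(T)$ (both $u_{I}|_T$ and the polynomial $u_{\Pi}$ belong to $V^{k}_{h}(T)$), all these projections and stabilizers act on $u^{'}$, and the size of $u^{'}$ in the required norms follows from the triangle inequality $u^{'}=(u_{I}-u)-(u_{\Pi}-u)$ together with \eqref{aprox1} and \eqref{aprox2}: $\|u^{'}\|_{0,T}\le C\,h_{T}^{k+1}|u|_{k+1,T}$, $|u^{'}|_{1,T}\le C\,h_{T}^{k}|u|_{k+1,T}$, and $\|\Delta u^{'}\|_{0,T}\le\mu_{1}h_{T}^{-1}|u^{'}|_{1,T}\le C\,h_{T}^{k-1}|u|_{k+1,T}$ by the inverse inequality \eqref{dis_coer5}.

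For the diffusion, reaction and symmetric convective parts $a^{T}_{h}(u^{'},\delta)$, $c^{T}_{h}(u^{'},\delta)$ and $b^{T,sym}_{h}(u^{'},\delta)$ the estimate is routine: Cauchy--Schwarz together with the $L^{2}$--stability of the projections handles the polynomial parts, the stabilizers are controlled as above, and after summing over $T$ and inserting the definition of $|||\cdot|||$ each of these yields a term of the form $C\,h^{k}(\sum_{T}w_{T}|u|^{2}_{k+1,T})^{1/2}\,|||\delta|||$ with $w_{T}\in\{\epsilon,\,h_{T}^{2}\}$ --- the $\|\delta\|_{0,T}$ factors coming from $c^{T}_{h}$ and $b^{T,sym}_{h}$ being absorbed into the $c_{0}\|\delta\|^{2}_{0,T}$ part of $|||\delta|||$ and the $|\delta|_{1,T}$ factor from $a^{T}_{h}$ into the $\epsilon|\delta|^{2}_{1,T}$ part.

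The substantive work is in the skew--symmetric convective term $b^{T,skew}_{h}(u^{'},\delta)$ and in the SDFEM stabilization $b^{T,stab}_{h}(u^{'},\delta)$, which contains $G^{T}_{h}$; these involve $\vec{b}\cdot\Pi_{k-1}(\nabla\delta)$, controlled only in the weighted form $\delta_{T}^{1/2}\|\vec{b}\cdot\nabla\delta\|_{0,T}$ through $|||\delta|||$. The recipe is the $\delta_{T}^{1/2}\!\cdot\!\delta_{T}^{-1/2}$ splitting already used in \eqref{bd6} and \eqref{bd8}: the $\delta$--factor always carries a $\delta_{T}^{1/2}$ and, after writing $\Pi_{k-1}(\nabla\delta)=\nabla\delta-(I-\Pi_{k-1})(\nabla\delta)$, the projection gap is absorbed via $\|(I-\Pi_{k-1})(\nabla\delta)\|_{0,T}\le c_{I}|\delta|_{1,T}\le c_{I}\mu_{2}h_{T}^{-1}\|\delta\|_{0,T}$ and the bound $\delta_{T}h_{T}^{-2}\le C$ from \eqref{dis_coer7}, so that $\delta_{T}^{1/2}\|\vec{b}\cdot\Pi_{k-1}(\nabla\delta)\|_{0,T}\le C\,|||\delta|||$. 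The matching $u^{'}$--factor $\delta_{T}^{-1/2}\|\Pi_{k}u^{'}\|_{0,T}\le C\,\delta_{T}^{-1/2}h_{T}^{k+1}|u|_{k+1,T}$ then supplies the weight $w_{T}=h_{T}^{2}/\delta_{T}$, exactly as in Lemma~\ref{bd1}. The zeroth order pieces of $b^{T,stab}_{h}$ use $\|{-}\epsilon\,\Pi_{k-2}(\Delta u^{'})+c\,\Pi_{k}(u^{'})\|_{0,T}\le\epsilon\|\Delta u^{'}\|_{0,T}+c_{\max}\|u^{'}\|_{0,T}$, the inverse inequality \eqref{dis_coer5}, and $\epsilon\,\delta_{T}\le C\,h_{T}^{2}$ to remove the negative power of $h_{T}$, giving weights $w_{T}\in\{\epsilon,\,\delta_{T}h_{T}^{2}\}$; the quadratic streamline term inside $G^{T}_{h}$ is bounded by $(\delta_{T}^{1/2}\|\vec{b}\cdot\Pi_{k-1}(\nabla u^{'})\|_{0,T})(\delta_{T}^{1/2}\|\vec{b}\cdot\Pi_{k-1}(\nabla\delta)\|_{0,T})\le C\,\delta_{T}^{1/2}h_{T}^{k}|u|_{k+1,T}\,|||\delta|||$; and the stabilizer $s^{T,stab}_{b}$ by Cauchy--Schwarz and \eqref{stbl3}, with $s^{T,stab}_{b}((I-\Pi_{k})\delta,(I-\Pi_{k})\delta)\le\Gamma^{\ast}\delta_{T}\|\vec{b}\cdot\nabla\delta\|^{2}_{0,T}\le\Gamma^{\ast}|||\delta|||^{2}$ and $s^{T,stab}_{b}((I-\Pi_{k})u^{'},(I-\Pi_{k})u^{'})\le\Gamma^{\ast}\delta_{T}\|\vec{b}\|^{2}_{\infty,T}|u^{'}|^{2}_{1,T}\le C\,\delta_{T}h_{T}^{2k}|u|^{2}_{k+1,T}$. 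Collecting all the contributions yields the stated bound, with $\zeta$ the sum of the weights $w_{T}$ that occur, a positive quantity of the same type as the constant $\eta$ in Lemma~\ref{bd1}.

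The main obstacle is precisely this bookkeeping of the $\delta_{T}^{\pm1/2}$ weights in combination with the projection gaps $(I-\Pi_{k-1})(\nabla\delta)$, $(I-\Pi_{k-1})(\nabla u^{'})$ and $(I-\Pi_{k-2})(\Delta u^{'})$: in every convective and stabilization term one must check that, once these gaps are separated off, the leftover factors are genuinely absorbed by the structural upper bounds on $\delta_{T}$ in \eqref{dis_coer7} --- notably $\delta_{T}\le C\,h_{T}^{2}/\epsilon$, $\epsilon\,\delta_{T}\le C\,h_{T}^{2}$ and $\delta_{T}h_{T}^{-2}\le C$ --- so that no uncompensated inverse power of $h_{T}$ or of $\epsilon$ survives and the overall rate $O(h^{k})$ is preserved; the symmetric parts, by contrast, are immediate.
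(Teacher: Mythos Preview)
Your approach is correct and coincides with the paper's on all symmetric pieces ($a^{T}_{h}$, $c^{T}_{h}$, $b^{T,sym}_{h}$, and the $G^{T}_{h}$ block of $b^{T,stab}_{h}$): both arguments apply Cauchy--Schwarz in the form--induced inner product, then the stability bounds (\ref{stbl1})--(\ref{stbl4}), then the approximation estimates (\ref{aprox1})--(\ref{aprox2}) on $u^{'}$. The genuine difference is in how the factors $\vec{b}\cdot\Pi_{k-1}(\nabla\delta)$ are absorbed into $|||\delta|||$. The paper does \emph{not} carry out your projection--gap/inverse--inequality argument; it simply bounds $\|\Pi_{k-1}(\nabla\delta)\|_{0,T}\le|\delta|_{1,T}$ and pairs this with the diffusion seminorm $\epsilon^{1/2}|\delta|_{1,T}$ inside $|||\delta|||$, at the cost of an $\epsilon^{-1/2}$ on the $u^{'}$--side. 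Concretely, for $\int_{T}\vec{b}\cdot\Pi_{k-1}(\nabla\delta)\,\Pi_{k}(u^{'})$ the paper gets the weight $h_{T}^{2}/\epsilon$, and for $\int_{T}c\,\Pi_{k}(u^{'})\,\delta_{T}\,\vec{b}\cdot\Pi_{k-1}(\nabla\delta)$ the weight $\delta_{T}^{2}h_{T}^{2}/\epsilon$, so that the paper's final constant is $\zeta=1+\epsilon+\delta_{T}+h_{T}^{2}+h_{T}^{2}/\epsilon+\delta_{T}^{2}h_{T}^{2}/\epsilon$. Your $\delta_{T}^{\pm 1/2}$ splitting together with $\delta_{T}h_{T}^{-2}\le C$ from (\ref{dis_coer7}) replaces these by $h_{T}^{2}/\delta_{T}$ and $\delta_{T}h_{T}^{2}$, giving a $\zeta$ free of inverse powers of $\epsilon$ --- a real improvement in the convection--dominated regime. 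One small omission in your sketch: the first half of $b^{T,skew}_{h}$, namely $\tfrac{1}{2}\int_{T}\vec{b}\cdot\Pi_{k-1}(\nabla u^{'})\,\Pi_{k}(\delta)$, involves $\Pi_{k}(\delta)$ rather than $\Pi_{k-1}(\nabla\delta)$ and is bounded directly by $C\|\vec{b}\|_{\infty,T}\,h_{T}^{k}|u|_{k+1,T}\,\|\delta\|_{0,T}$ (weight $w_{T}=1$); the paper treats this term separately and you should list it as well.
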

 
 \begin{proof}
 Bilinear form $a^{T}_{h}(.,.) $  is symmetric hence it defines an inner-product on $V^{k}_{h} \times V^{k}_{h}$.We can bound inner product by energy norm .
 Using stability property of $a^{T}_{h}(.,.)$ and considering the property (\ref{aprox2})  we can  bound diffusion tern
 
 \begin{eqnarray}
  a^{T}_{h}(u^{'},\delta) &\leq & (a^{T}_{h}(u^{'},u^{'}))^{1/2} (a^{T}_{h}(\delta,\delta))^{1/2} \nonumber \\
 &\leq & \alpha^{\ast} (a^{T}(u^{'},u^{'}))^{1/2} (a^{T}(\delta,\delta))^{1/2} \nonumber \\
 & \leq & \alpha^{\ast} \epsilon^{1/2} \| \nabla u^{'} \|_{0,T}\epsilon^{1/2} \| \nabla \delta \|_{0,T} \nonumber \\
 &\leq & C \alpha^{\ast}\epsilon^{1/2} h^{k}_{T} |u|_{k+1,T} \epsilon^{1/2} \| \nabla \delta \|_{0,T} \nonumber 
\end{eqnarray}  

\begin{equation}
\sum_{T} a^{T}_{h} (u^{'},\delta) \leq C h^{k} (\sum_{T} \epsilon |u|^{2}_{k+1,T})^{1/2} |||\delta |||
\label{bond_dis1}
\end{equation}

similarly 

\begin{eqnarray}
c^{T}_{h} (u^{'},\delta) &\leq & \gamma^{\ast} c_{\text{max}} h^{k+1}_{T} |u|_{k+1} \|\delta \|_{0,T} \nonumber
\end{eqnarray}

\begin{equation}
\sum_{T} c^{T}_{h}(u^{'},\delta) \leq C h^{k}(\sum_{T} h^{2}_{T} |u|^{2}_{k+1} )^{1/2} |||\delta|||
\label{bond_dis2}
\end{equation}
Symmetric part of convection part can be bounded using same idea as earlier
\begin{equation}
b^{T,sym}_{h}(u^{'},\delta) \leq  s^{\ast} \frac{1}{2} |\nabla.\vec{b}| h^{k+1}_{T} |u|_{k+1} \| \delta \|_{0,T} \nonumber
\end{equation}

summing over $T\in \tau_{h}$ and using Cauchy-Schwarz inequality and approximation property (\ref{aprox1}, \ref{aprox2})  we get
\begin{equation}
\sum_{T} b^{T,sym}_{h}(u^{'},\delta) \leq C h^{k}(\sum_{T} h^{2}_{T} |u|^{2}_{k+1} )^{1/2} |||\delta|||
\label{new10}
\end{equation}

Using same technique as diffusion and reaction part we bound additional symmetric stabilization part

\begin{eqnarray}
&&\int_{T} (\vec{b}.\Pi_{k-1}(\nabla u^{'})) \delta_{T} (\vec{b}.\Pi_{k-1}(\nabla \delta)) +s^{T,stab}_{b}((I-\Pi_{k})u^{'},(I-\Pi_{k})\delta) \nonumber \\ 
&& \leq \Gamma^{\ast} \delta^{1/2}_{T} C \|u^{'}\|_{0,T} \delta^{1/2}_{T} \| \vec{b}.\nabla \delta \|_{0,T} \nonumber 
\end{eqnarray}

Summing over all element $T$
\begin{eqnarray}
&&\sum_{T} \{ \int_{T} (\vec{b}.\Pi_{k-1}(\nabla u^{'})) \delta_{T} (\vec{b}.\Pi_{k-1}(\nabla \delta)) +s^{T,stab}_{b}((I-\Pi_{k})u^{'},(I-\Pi_{k})\delta) \} \nonumber \\
&& \leq C h^{k}(\sum_{T} \delta_{T} |u|^{2}_{k+1,T} )^{1/2} |||\delta|||
\label{bond_dis3}
\end{eqnarray}

Using Cauchy-Schwarz inequality and boundedness property of projection operator $\Pi_{k}$ we can estimate 
\begin{eqnarray}
\int_{T} [\vec{b}.\Pi_{k-1}(\nabla u^{'})] \Pi_{k}(\delta) &=& \int_{T} [\vec{b}.\Pi_{k-1}(\nabla u^{'})] \delta \nonumber \\
&\leq & C \|\vec{b} \|_{0,\infty} \| \nabla u^{'} \|_{0,T} \|\delta \|_{0,T} \nonumber \\
&\leq & C \|\vec{b} \|_{0,\infty} h^{k}_{T} |u|_{k+1,T} \|\delta \|_{0,T} \nonumber
\end{eqnarray}
Summing over all element $T\in \tau_{h}$ we get
 
\begin{equation}
\sum_{T} \int_{T} [\vec{b}.\Pi_{k-1}(\nabla u^{'})] \Pi_{k}(\delta) \leq C \|\vec{b} \|_{0,\infty} h^{k} (\sum_{T} |u|^{2}_{k+1} )^{1/2} ||| \delta |||
\label{bond_dis5}
\end{equation}

Using Cauchy-Schwarz inequality and boundedness and orthogonality  property of projection operator  we can estimate
\begin{eqnarray}
\int_{T} [\vec{b}.\Pi_{k-1}(\nabla \delta)] \Pi_{k}(u^{'}) &=& \int_{T} [\vec{b}.\Pi_{k-1}(\nabla \delta)] u^{'} \nonumber \\
&\leq & C h^{k+1}_{T} \|\vec{b} \|_{0,\infty} |u|_{k+1,T} \| \nabla \delta \|_{0,T} \nonumber \\
&\leq & C \|\vec{b}\| \epsilon^{-1/2} h^{k+1}_{T} |u|_{k+1,T} \epsilon^{1/2} |\delta|_{1,T} \nonumber
\end{eqnarray}
 summing over all $T\in \tau_{h}$
\begin{eqnarray}
\sum_{T}\int_{T} [\vec{b}.\Pi_{k-1}(\nabla \delta)] \Pi_{k}(u^{'}) &\leq & \sum_{T} C \|\vec{b}\| \epsilon^{-1/2} h^{k+1}_{T} |u|_{k+1,T} \epsilon^{1/2} |\delta|_{1,T} \nonumber \\
&\leq & C \|\vec{b}\|_{0,\infty} (\sum_{T} (\frac{h^{2}_{T}}{\epsilon}) h^{2k}_{T} |u|^{2}_{k+1} )^{1/2} (\sum_{T} \epsilon |\delta|^{2}_{1,T})^{1/2} \nonumber \\
&\leq & C h^{k} \|\vec{b}\|_{0,\infty}(\sum_{T} (\frac{h^{2}_{T}}{\epsilon})  |u|^{2}_{k+1} )^{1/2} |||\delta |||
\label{bond_dis6} 
\end{eqnarray}

Now we shall bound additional term .We consider the control parameter $\delta_{T}$ satisfies the condition (\ref{dis_coer7}). Using boundedness property of projection operator $\Pi_{k-2}$ we estimate  
\begin{eqnarray}
\int_{T} -\epsilon \Pi_{k-2} (\Delta u^{'}) \delta_{T} (\vec{b}.\Pi_{k-1}(\nabla \delta)) &\leq & \epsilon \delta_{T}^{1/2} \| \Pi_{k-2} \ (\Delta u^{'})\|_{0,T} \  \delta^{1/2}_{T} \| \vec{b}.\nabla \delta \|_{0,T} \nonumber \\
 &\leq &  C \epsilon \delta^{1/2}_{T}  \| \Delta u^{'} \|_{0,T} \  \delta^{1/2}_{T}\| \vec{b}.\nabla \delta \|_{0,T}  \nonumber
\end{eqnarray}

Using Cauchy-Schwarz inequality and using property of the control parameter $\delta_{T}$, $\epsilon \delta_{T} <C h^{2}_{T}$ we estimate
\begin{eqnarray}
&& \sum_{T} \int_{T} -\epsilon \Pi_{k-2} (\Delta u^{'}) \delta_{T} (\vec{b}.\Pi_{k-1}(\nabla \delta)) \nonumber \\ 
&&\leq C (\sum_{T} \epsilon^{2} \delta_{T} \|\Delta u^{'}\|^{2})^{1/2} (\sum_{T} \delta_{T} \|\vec{b}.\nabla \delta\|^{2}_{0,T})^{1/2} \nonumber \\
&& \leq C (\sum_{T} \epsilon h^{2k}_{T} |u|^{2}_{k+1})^{1/2} |||\delta||| \nonumber \\
&& \leq C h^{k} (\sum_{T} \epsilon  |u|^{2}_{k+1})^{1/2} |||\delta|||
\label{bond_dis7}
\end{eqnarray}

Using boundedness property of projection operator $\Pi_{k-1}$ we can write  
\begin{eqnarray}
\int_{T} c \Pi_{k}(u^{'}) \delta_{T} (\vec{b}.\Pi_{k-1}(\nabla \delta))&=& \int_{T} c u^{'} \delta_{T} (\vec{b}.\Pi_{k-1}(\nabla \delta)) \nonumber \\
&\leq & c_{max} \| u^{'}\|_{0,T} \delta_{T} \|\vec{b}.\Pi_{k-1} (\nabla \delta)\|_{0,T} \nonumber \\
&\leq & C \|\vec{b}\|_{0,\infty}  \| \nabla \delta \|_{0,T} \delta_{T} h^{k+1}_{T} |u|_{k+1} \nonumber
\end{eqnarray}
Taking sum over all element $T$ we establish the following inequality
\begin{eqnarray}
&&\sum_{T} c \Pi_{k}(u^{'}) \delta_{T} (\vec{b}.\Pi_{k-1}(\nabla \delta))\nonumber \\
&& \leq C (\sum_{T} \delta^{2}_{T} \epsilon^{-1} h^{2k+2}_{T} |u|^{2}_{k+1,T})^{1/2} (\sum_{T} \epsilon |\delta |^{2}_{1,T} )^{1/2} \nonumber \\
&& \leq C h^{k} (\sum_{T} \delta^{2}_{T} (\frac{h^{2}_{T}}{\epsilon}) |u|^{2}_{k+1,T})^{1/2} |||\delta |||
\label{bond_dis8}
\end{eqnarray}

 Using all above established  inequalities we finally obtain  
\begin{equation}
\bigg| \sum_{T} A^{T}_{h} (u_{I}-u_{\Pi},\delta) \bigg| \leq C h^{k} (\sum_{T} \zeta |u|^{2}_{k+1,T})^{1/2} |||\delta|||
\label{bond_dis9}
\end{equation}

where $\zeta=1+\epsilon+\delta_{T}+h^{2}_{T}+(\frac{h^{2}_{T}}{\epsilon})+\delta^{2}_{T} (\frac{h^{2}_{T}}{\epsilon})$
 
 \end{proof}
\subsection{consistency error estimates \label{sec12}}
Nonconforming VE space $V^{k}_{h} \nsubseteq H^{1}(\Omega)$. 
Function $v_{h}$ on virtual element space $V^{k}_{h}$ is not continuous along interior edges except gauss lobatto points which introduce additional consistency error term$[|v_{h}|]$. We will see  that two consistency error terms will arise in  the proof of convergence analysis in next section.Diffusion part introduce 
$\sum_{T} \int_{\partial T} (\epsilon \nabla u.\mathbf{n}) \delta$ and convection part introduce $\sum_{T} \int_{\partial T} (\vec{b}.\mathbf{n})u \delta$.We use patch test to bound these error terms.Before going into detail proof we introduce some basic result which will help us bound the consistency error terms.

 Let $\mathbf{P}^{e}_{s}:L^{2}(e)\rightarrow \mathbb{P}^{s}(e)$ is the $L^{2}-$ orthogonal projection operator onto the space $\mathbb{P}^{s}(e)$ for $s\geq 0.$
 Let $e\in \varepsilon^{0}_{h}$ be an interior edge and $e$ is shared by two elements $T^{+}$ and $T^{-}$ as a common edge.Standard approximation results \cite{ciarlet1991basic,brenner2008mathematical,di2011mathematical} say that
 \begin{eqnarray}
 \|\nabla u-\mathbf{P}^{e}_{k-1}(\nabla u)\|_{0,e} &\leq & C h^{k-1/2} \|u \|_{k+1,T^{+}\cup T^{-}}  \nonumber \\
 \|[|v_{h}|]-\mathbf{P}^{e}_{0}([|v_{h}|])\|_{0,e} &\leq & C h^{1/2} |v_{h}|_{0,T^{+}\cup T^{-}} 
 \label{conerr1}
\end{eqnarray}    

Let $u\in H^{m}(\Omega),m\geq 3/2$ .$H^{3/2}$ is the space with  minimum regularity to ensure that the analysis can be carried out.
Using patch test of $v_{h}\in V^{k}_{h}$ and stated result(\ref{conerr1}) we can estimate
\begin{eqnarray}
\sum_{T} \int_{\partial T} (\epsilon \nabla u.\mathbf{n}) \delta &=& \sum_{e\in \varepsilon_{h}} \int_{e} \epsilon \nabla u.[|\delta|] \nonumber \\
&\leq & \sum_{e\in \varepsilon_{h}}  \| \nabla u-\mathbf{P}^{e}_{k-1}(\nabla u) \|_{0,e} \ \|[|\delta |]-\mathbf{P}^{e}_{0} ([|\delta|]) \|_{0,e} \nonumber \\
&\leq & C\sum_{T}  h^{k} \epsilon \|u\|_{k+1,T} |\delta |_{1,T} \nonumber \\
&\leq & C h^{k} (\sum_{T} \epsilon \|u\|^{2}_{k+1,T})^{1/2} (\sum_{T} \epsilon |\delta|^{2}_{1,T})^{1/2} \nonumber \\
&\leq & C h^{k}(\sum_{T} \epsilon \|u\|^{2}_{k+1,T})^{1/2} |||\delta |||
\label{conerr2}
\end{eqnarray}
Using same technique as described for estimation (\ref{conerr2}) we can bound 

\begin{eqnarray}
\sum_{T} \int_{\partial T} (\vec{b}.\mathbf{n})u \delta &=& \sum_{e} \int_{e} (\vec{b}.\mathbf{n}) u [|\delta |] \nonumber \\
& \leq & C |\vec{b}.\mathbf{n}|_{\infty} \sum_{e} \| u-\mathbf{P}_{k-1}(u) \|_{0,e} \ \|[|\delta |]-\mathbf{P}_{0}([|\delta|])\|_{0,e}  \nonumber \\
& \leq & C \sum_{T} h^{k+1}_{T} |u|_{k+1,T} |\delta|_{1,T} \nonumber \\
&\leq & C h^{k} (\sum_{T} (\frac{h^{2}_{T}}{\epsilon})|u|^{2}_{k+1,T})^{1/2} (\sum_{T} \epsilon |\delta |^{2}_{1,T})^{1/2} \nonumber\\
&\leq & C h^{k} (\sum_{T} (\frac{h^{2}_{T}}{\epsilon})|u|^{2}_{k+1,T})^{1/2} \ |||\delta |||
\label{conerr3}
\end{eqnarray}

\subsection{Right-hand side estimation \label{sec13}}
In this paragraph we describe  construction of external force term $f$.$H^{1,nc}(\tau_{h};2)$ is the space with minimum patch test to ensure the estimation (\ref{rh1}) $\&$ (\ref{rh2}).Let $\mathbf{P}^{T}_{k}:L^{T}\rightarrow \mathbb{P}^{k}$ be orthogonal $L^{2}$ projection and $(f_{h})|_{T}:={P}^{T}_{k-2}(f)$. We consider locally $f|_{T}\in H^{1}(T)$ .Using $L^{2}$ projection operator ,  Cauchy-Schwarz inequality and standard approximation result we have 
\begin{eqnarray}
\bigg| <f,v_{h}>-<f_{h},v_{h}> \bigg| &=& \bigg| \sum_{T} \int_{T} (f-\mathbf{p}^{T}_{k-2}(f))(v_{h}-\mathbf{p}^{T}_{0}(v_{h}) )\bigg| \nonumber \\
&\leq & \sum_{T} \| f-\mathbf{p}^{T}_{k-2}(f) \|_{0,T} \  \|v_{h}-\mathbf{p}^{T}_{0}(v_{h}) \|_{0,T} \nonumber \\
&\leq & C h^{\text{min}(k-1,s-1)} (\sum_{T} (\frac{h^{2}_{T}}{\epsilon}) |f|^{2}_{s-1,T})^{1/2} (\sum_{T} \epsilon |v_{h}|^{2}_{1,T})^{1/2} \nonumber \\
&=& C h^{\text{min}(k-1,s-1)} (\sum_{T} (\frac{h^{2}_{T}}{\epsilon}) |f|^{2}_{s-1,T})^{1/2} |||v_{h}|||
\label{rh1}
\end{eqnarray}
Using $L^{2}$ projection, Cauchy-Schwarz inequality and standard approximation result we can bound second consistency error term 

\begin{eqnarray}
&&\bigg|<f,\delta_{T} (\vec{b}.\nabla v_{h})>-<f_{h},\delta_{T} (\vec{b}.\nabla v_{h}) > \bigg| \nonumber \\
&& =\bigg| \sum_{T} \int_{T} (f-\mathbf{P}^{T}_{k-2}(f)) \delta_{T} [\vec{b}.\nabla v_{h}-\mathbf{P}^{T}_{0}(\vec{b}.\nabla v_{h})] \bigg|\nonumber \\
&& \leq C h^{\text{min}(k-1,s-1)}(\sum_{T} \delta_{T} |f|^{2}_{s-1})^{1/2} (\sum_{T} \delta_{T} \|\vec{b}.\nabla v_{h}\|^{2}_{0,T})^{1/2} \nonumber \\
&& = C h^{\text{min}(k-1,s-1)}(\sum_{T} \delta_{T} |f|^{2}_{s-1})^{1/2} \ |||v_{h}|||
\label{rh2}
\end{eqnarray}

\textbf{Remark:} In the above estimation (\ref{rh1}) and (\ref{rh2}) constant $C$ depend on $\frac{h^{2}_{T}}{\epsilon}$ and $\delta_{T}$ .In convection dominated case $\epsilon$ is very small quantity and so $\frac{1}{\epsilon}$ is very large quantity.But this does not hamper convergence analysis since as $h_{T}\rightarrow 0$, $h^{2}_{T}$ dominates $\epsilon $.

\section{Convergence Analysis \label{sec14}}

\begin{thm}
\label{cnvr10}
Let $u \in H^{k+1}(\Omega)$ be weak solution of the bilinear form (\ref{temp3}).Let $v_{h}\in V^{k}_{h}$ the be discrete solution of the bilinear form (\ref{temp5}).Let $v_{h}|_{T}\in H^{2}(T)$ , $f|_{T}\in H^{s}(T), (s\geq 1) $ locally and the control parameter $\delta_{T}$ satisfies the condition (\ref{dis_coer7}) .Then the discrete solution satisfy 
\begin{eqnarray}
 |||u-u_{h} ||| &\leq & C h^{k} (\sum_{T} \Gamma |u|^{2}_{k+1,T})^{1/2} +C h^{k} (\sum_{T} \epsilon \|u \|_{k+1,T}^{2})^{1/2} \nonumber \\
 &+& C h^{\text{min}(k-1,s-1)} (\sum_{T} \delta_{T} |f|^{2}_{s-1})^{1/2} \nonumber \\
 &+& C h^{\text{min}(k-1,s-1)} (\sum_{T} (\frac{h^{2}_{T}}{\epsilon}) |f|^{2}_{s-1,T})^{1/2} \nonumber
 \end{eqnarray}
 
 where norm $|||.|||$ is defined in (\ref{dis_coer1}),  and $\Gamma$ is defined by $\Gamma:=\epsilon+\delta_{T}+h^{2}_{T}+(\frac{h^{2}_{T}}{\delta})+(\frac{h^{2}_{T}}{\epsilon})+\delta^{2}_{T}(\frac{h^{2}_{T}}{\epsilon})$. 

\end{thm}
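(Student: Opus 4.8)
The plan is to derive the error estimate via the standard Strang-type argument for nonconforming methods, splitting the total error into an approximation part and a discrete part and then combining the lemmas already proved. First I would introduce the interpolant $u_I \in V^k_h$ and the elementwise polynomial projection $u_\Pi \in \mathbb{P}^k(T)$, and write $u - u_h = (u - u_\Pi) + (u_\Pi - u_I) + (u_I - u_h)$. Setting $\tilde u = u_\Pi - u$, $u' = u_I - u_\Pi$ and $\delta = u_h - u_I$, it suffices to bound $|||\delta|||$, since the other two pieces are controlled directly by the polynomial approximation property (\ref{aprox1}) and the interpolation estimate (\ref{aprox2}), whose contribution to $|||\cdot|||$ is of order $h^k(\sum_T \Gamma |u|^2_{k+1,T})^{1/2}$ after accounting for the $\epsilon$, $c_0$ and $\delta_T$ weights in the norm.

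Next I would use discrete coercivity (Lemma \ref{dis_coer2}) to write $\alpha |||\delta|||^2 \le A_h(\delta,\delta) = A_h(u_h - u_I, \delta)$, and then expand using the discrete scheme (\ref{temp5}) and the continuous scheme (\ref{temp3}): $A_h(u_h,\delta) = F_h(\delta)$ while $A_h(u_I,\delta) = A_h(u_I - u_\Pi,\delta) + A_h(u_\Pi,\delta)$. Using polynomial consistency (Lemma \ref{pc1}), $A_h(u_\Pi,\delta) = A(u_\Pi,\delta)$ on each element — here I must be careful that Lemma \ref{pc1} only gives this up to the approximation error noted in the Remark after it, so strictly the identity holds modulo a term of order $h_T |\nabla \delta|_{1,T}$ which would need $H^2$ regularity of $\delta$; I would invoke exactly the hypothesis $v_h|_T \in H^2(T)$ stated in the theorem. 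Then $A(u_\Pi,\delta) = A(u,\delta) + A(\tilde u,\delta)$ where $A(\tilde u,\delta)$ is bounded by Lemma \ref{bd1}, and $A(u,\delta) - F(\delta)$ collects the nonconforming consistency errors: integrating by parts elementwise produces the two jump terms $\sum_T \int_{\partial T}(\epsilon\nabla u\cdot \mathbf{n})\delta$ and $\sum_T\int_{\partial T}(\vec b\cdot\mathbf{n})u\,\delta$, bounded in (\ref{conerr2})–(\ref{conerr3}), plus the right-hand-side consistency terms $F(\delta) - F_h(\delta)$ bounded in (\ref{rh1})–(\ref{rh2}). Finally $A_h(u_I - u_\Pi,\delta) = A_h(u',\delta)$ is bounded by Lemma \ref{bond_dis10}.

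Assembling, I would divide through by $|||\delta|||$ to obtain
\begin{equation}
|||\delta||| \le C h^k\Big(\sum_T \zeta|u|^2_{k+1,T}\Big)^{1/2} + C h^k\Big(\sum_T \eta|u|^2_{k+1,T}\Big)^{1/2} + C h^k\Big(\sum_T\epsilon\|u\|^2_{k+1,T}\Big)^{1/2} + (\text{RHS terms}), \nonumber
\end{equation}
and then the triangle inequality $|||u-u_h||| \le |||u-u_\Pi||| + |||u_\Pi - u_I||| + |||\delta|||$ finishes the proof, with $\Gamma$ chosen as the stated maximum of the exponential-weight combinations appearing in $\zeta$, $\eta$ and the direct approximation terms (note $\Gamma$ absorbs $\zeta$ and $\eta$ since $1+\epsilon+\delta_T+h_T^2+h_T^2/\delta+h_T^2/\epsilon+\delta_T^2 h_T^2/\epsilon$ dominates both).

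The main obstacle I anticipate is handling the polynomial-consistency step cleanly: Lemma \ref{pc1} as stated is only an approximate identity (the Remark makes this explicit), so the argument does not give $A_h(u_\Pi,\delta) = A(u_\Pi,\delta)$ exactly but only up to a term proportional to $h_T|\nabla\delta|_{1,T}\|u_\Pi\|_{0,T}$. Controlling this requires the $H^2(T)$ regularity assumption on $\delta = u_h - u_I$ (inherited from $v_h|_T\in H^2(T)$ and the regularity of $u_I$) and an inverse-type or interpolation bound to convert $|\nabla\delta|_{1,T}$ into something absorbable by $\epsilon^{1/2}|\delta|_{1,T}$; this is where the hidden dependence on $1/\epsilon$ and on $h_T$ smallness enters, and it is the reason the theorem carries the $v_h|_T\in H^2(T)$ hypothesis rather than being a fully rigorous $\epsilon$-uniform estimate. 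Tracking the $\epsilon$- and $\delta_T$-weights consistently through every term so they reassemble into the norm $|||\cdot|||$ is the other place where care is needed, though it is routine bookkeeping.
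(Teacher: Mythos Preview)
Your proposal is correct and follows essentially the same Strang-type argument as the paper: split via $u_I$, use discrete coercivity on $\delta=u_h-u_I$, add and subtract $u_\Pi$ to invoke polynomial consistency, then apply Lemmas~\ref{bd1} and~\ref{bond_dis10}, the consistency-error bounds (\ref{conerr2})--(\ref{conerr3}), and the right-hand-side estimates (\ref{rh1})--(\ref{rh2}). The only cosmetic difference is that the paper uses the two-term triangle inequality $|||u-u_h|||\le|||u-u_I|||+|||\delta|||$ rather than your three-term split through $u_\Pi$, and your explicit acknowledgement that Lemma~\ref{pc1} is only an approximate identity is more careful than the paper, which simply asserts $A_h^T(u_\Pi,\delta)=A^T(u_\Pi,\delta)$ without tracking the residual.
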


\begin{proof} 
Let $u_{I}$ be interpolation approximation of $ u $ in $ V^{k}_{h} $ .We consider VE space satisfies the estimations(\ref{aprox2}). Introducing $u_{I}$ we divide  $|||u-u_{h}|||$  into two parts as
\begin{eqnarray}
|||u-u_{h}||| &=& |||u-u_{I}+u_{I}-u_{h}||| \nonumber \\
              &\leq & |||u-u_{I}|||+|||u_{I}-u_{h}||| \nonumber 
\end{eqnarray} 
We first bound second term $|||u_{I}-u_{h}|||$  using discrete coercivity of $A_{h}(.,.)$, lemma(\ref{bond_dis10}) and lemma (\ref{bd1}). 
Let us denote
$\delta:=u_{h}-u_{I}$. Using discrete coercivity of $A_{h}(.,.)$ we can write
\begin{eqnarray}
\alpha |||\delta |||^{2} & \leq & A_{h}(\delta,\delta) \nonumber \\
                         & = & A_{h}(u_{h},\delta)-A_{h}(u_{I},\delta) \nonumber \\
                         & =& <f_{h},\delta> + <f_{h},\delta_{T} \vec{b}.\nabla \delta>- A_{h}(u_{I},\delta) \nonumber \\
                         &=& <f_{h},\delta> + <f_{h},\delta_{T} \vec{b}.\nabla \delta> -\sum_{T} A^{T}_{h}(u_{I},\delta)
                         \label{cnvr14} 
\end{eqnarray}
Adding and subtracting $A^{T}_{h}(u_{\Pi},\delta)$ and $A^{T}(u_{\Pi},\delta)$ we get

\begin{eqnarray}
A^{T}_{h}(u_{I},\delta) &= &A^{T}_{h} (u_{I}-u_{\Pi},\delta) +A^{T}_{h}(u_{\Pi},\delta) \nonumber \\
& = & A^{T}_{h} (u_{I}-u_{\Pi},\delta) +A^{T}_{h}(u_{\Pi},\delta) -A^{T}(u_{\Pi},\delta)+A^{T}(u_{\Pi},\delta) \nonumber\\
& =&  A^{T}_{h} (u_{I}-u_{\Pi},\delta) +A^{T}_{h}(u_{\Pi},\delta) -A^{T}(u_{\Pi},\delta) \nonumber \\
& + & A^{T}(u_{\Pi}-u,\delta)+A^{T}(u,\delta)
\label{cnvr11}
\end{eqnarray}

We have shown that $A^{T}_{h}(u_{h},v_{h})$ is polynomial consistent in(), hence $A^{T}_{h}(u_{\Pi},\delta)=A^{T}(u_{\Pi},\delta)$ 

Therefore the estimation (\ref{cnvr11}) reduces 
\begin{eqnarray}
A^{T}_{h}(u_{I},\delta) =A^{T}_{h} (u_{I}-u_{\Pi},\delta) +A^{T}(u_{\Pi}-u,\delta)+A^{T}(u,\delta)
\label{cnvr15}
\end{eqnarray}

We first estimate $\sum_{T}A^{T}(u,\delta)$ .  
Applying Green's theorem on each element $T$  we get 
\begin{eqnarray}
&&\frac{1}{2} (\int_{T} (\vec{b}.\nabla u) \delta -\int_{T}(\vec{b}.\nabla \delta) u-\int_{T} (\nabla.\vec{b})u \delta  ) \nonumber \\
&&= \int_{T} (\vec{b}.\nabla u) \delta  - \frac{1}{2} \int_{\partial T} (\vec{b}.\mathbf{n})u \delta 
\label{cnvr12}
\end{eqnarray}

 summing up the estimation (\ref{cnvr12}) over all element $T\in \tau_{h}$ we get
 additional term $\sum_{T}\frac{1}{2} \int_{\partial T} (\vec{b}.\mathbf{n})u \delta  $ which is described in the following estimation-
\begin{eqnarray}
\sum_{T} A^{T}(u,\delta ) &=& \sum_{T} <(-\epsilon \Delta u+\vec{b}.\nabla u+c u),\delta >_{T} \nonumber \\
& +&\sum_{T} \int_{T} \epsilon \nabla u.\mathbf{n} \delta - \sum_{T} \frac{1}{2} \int_{\partial T} (\vec{b}.\mathbf{n})u \delta \nonumber \\
& +& \sum_{T}<(-\epsilon \Delta u+\vec{b}.\nabla u+c u),\delta_{T} \vec{b}.\nabla \delta >_{T} \nonumber \\
& =& \sum_{T}<f,\delta >_{T}+\sum_{T} <f,\delta_{T}\vec{b}.\nabla \delta > \nonumber \\
&+& \sum_{e} \int_{e} (\epsilon \nabla u.\mathbf{n})[|\delta|]-\frac{1}{2} \sum_{e} \int_{e} (\vec{b}.\mathbf{n}) u [|\delta|]
\label{cnvr13}
\end{eqnarray}

Before putting(\ref{cnvr13}) into (\ref{cnvr14}) we first bound $A^{T}_{h}(u_{I}-u_{\Pi},\delta)$ and $A^{T}(u_{\Pi}-u,\delta)$.

Using lemma(\ref{bond_dis10})  we can write
\begin{equation}
\bigg|\sum_{T} A^{T}_{h}(u_{I}-u_{\Pi},\delta)\bigg| \leq C h^{k} (\sum_{T} \zeta |u|^{2}_{k+1,T})^{1/2} \ |||\delta||| 
\label{cnvr1}
\end{equation}
where $\zeta=\epsilon+\delta_{T}+h^{2}_{T}+(\frac{h^{2}_{T}}{\epsilon})+\delta^{2}_{T}(\frac{h^{2}_{T}}{\epsilon})$

Using lemma(\ref{bd1}) we can write 

\begin{equation}
\bigg|\sum_{T} A^{T}(u_{\Pi}-u,\delta)\bigg| \leq C h^{k} (\sum_{T} \eta |u|^{2}_{k+1,T})^{1/2} ||| \delta |||
\label{cnvr2}
\end{equation}

where $\eta=\epsilon+h^{2}_{T}+\delta_{T}+(\frac{h^{2}_{T}}{\epsilon})+(\frac{h^{2}_{T}}{\delta})$

We have seen that we got two consistency error terms in estimation(\ref{cnvr13}).
Now we shall bound consistency error terms.

Using estimation (\ref{conerr2}) we can write
\begin{equation}
\bigg|\sum_{e\in \varepsilon_{h}} \int_{e} (\epsilon \nabla u.\mathbf{n})[|\delta|] \bigg| \leq C h^{k} (\sum_{T} \epsilon \|u\|^{2}_{k+1,T})^{1/2} ||| \delta|||
\label{cnvr3} 
\end{equation}
Using estimation (\ref{conerr3}) we can write 
\begin{equation}
\bigg| \sum_{T} \int_{\partial T} (\vec{b}.\mathbf{n})u \delta \bigg| \leq C h^{k} (\sum_{T} (\frac{h^{2}_{T}}{\epsilon})) |u|^{2}_{k+1,T})^{1/2} |||\delta|||
\label{cnvr4}
\end{equation}

After putting (\ref{cnvr13}) into (\ref{cnvr14}) we get  two terms $|<f,\delta>-<f_{h},\delta>|$ and  $|\sum_{T}<f,\delta_{T}(\vec{b}.\nabla \delta)>_{T}-<f_{h},\delta_{T}(\vec{b}.\nabla \delta)>_{T}|$.

 Using estimation(\ref{rh1}) we can write 
\begin{equation}
\bigg|<f,\delta>-<f_{h},\delta>\bigg| \leq C h^{\text{min}(k-1,s-1)} (\sum_{T}(\frac{h^{2}_{T}}{\epsilon})|f|^{2}_{s-1,T})^{1/2} |||\delta||| 
\label{cnvr5}
\end{equation}

Using estimation (\ref{rh2}) we can write- 

\begin{eqnarray}
 && \bigg| \sum_{T}<f,\delta_{T}(\vec{b}.\nabla \delta)>_{T}-<f_{h},\delta_{T}(\vec{b}.\nabla \delta)>_{T} \bigg| \nonumber \\
 && C h^{\text{min}(k-1,s-1)} (\sum_{T} \delta_{T} |f|^{2}_{s-1})^{1/2} ||| \delta |||
 \label{cnvr6}
\end{eqnarray}
Putting estimation(\ref{cnvr1}), (\ref{cnvr2}), (\ref{cnvr3}), (\ref{cnvr4}), (\ref{cnvr5}) and (\ref{cnvr6}) in (\ref{cnvr14}) we finally obtain

\begin{eqnarray}
\alpha ||| \delta ||| &\leq & C h^{k} (\sum_{T} \Gamma |u|^{2}_{k+1,T})^{1/2}+Ch^{k} (\sum_{T} \epsilon \|u\|^{2}_{k+1,T})^{1/2} \nonumber \\
&+& C  h^{\text{min}(k-1,s-1)} (\sum_{T} \delta_{T} |f|^{2}_{s-1})^{1/2} \nonumber \\
&+& C  h^{\text{min}(k-1,s-1)} (\sum_{T}(\frac{h^{2}_{T}}{\epsilon})|f|^{2}_{s-1,T})^{1/2} 
\label{cnvr7}
\end{eqnarray}
 where $\Gamma=\epsilon+\delta_{T}+h^{2}_{T}+(\frac{h^{2}_{T}}{\delta})+(\frac{h^{2}_{T}}{\epsilon})+\delta^{2}_{T}(\frac{h^{2}_{T}}{\epsilon}) $
 
Again using triangle inequality we write 
 \begin{eqnarray}
 |||u-u_{h}||| &=&|||u-u_{I}+u_{I}-u_{h} ||| \nonumber \\
               &\leq & |||u-u_{I}|||+|||u_{I}-u_{h}||| \nonumber 
 \end{eqnarray}
 
Using estimation(\ref{aprox2}) we can write 
 \begin{eqnarray}
 |||u-u_{I}||| &=&[\sum_{T} \epsilon |u-u_{I}|^{2}_{1,T}+c_{0} \| u-u_{I}\|^{2}_{0,T}+\sum_{T} \delta_{T} \|\vec{b}.\nabla(u-u_{I})\|^{2}_{0,T}]^{1/2} \nonumber \\
&\leq & C h^{k}[\sum_{T} \epsilon |u|^{2}_{k+1,T}+h^{2}_{T} \sum_{T} |u|^{2}_{k+1,T}+\sum_{T} \delta_{T} |u|^{2}_{k+1,T} ]^{1/2} \nonumber \\
&\leq & C h^{k} (\sum_{T} \Gamma |u|^{2}_{k+1,T})^{1/2} 
  \label{cnvr9}
 \end{eqnarray}
Using inequality (\ref{cnvr9}) we finally obtain required estimation 
 
 \begin{eqnarray}
 |||u-u_{h} ||| &\leq & C h^{k} (\sum_{T} \Gamma |u|^{2}_{k+1,T})^{1/2}+Ch^{k} (\sum_{T} \epsilon \|u\|^{2}_{k+1,T})^{1/2} \nonumber \\
 &+& C h^{\text{min}(k-1,s-1)} (\sum_{T} \delta_{T} |f|^{2}_{s-1})^{1/2} \nonumber \\
 &+& C h^{\text{min}(k-1,s-1)} (\sum_{T} (\frac{h^{2}_{T}}{\epsilon}) |f|^{2}_{s-1,T})^{1/2}
 \end{eqnarray}
\end{proof} 
 
 \subsection{computation issue \label{sec15}}
 The $L^{2}-$ orthogonal projection operator $\Pi^{0}_{k}$ can be computed using elliptic projection operator  $ \Pi^{\nabla}_{k}$

\begin{equation}
\int_{T} p \Pi^{\nabla}_{k} (v_{h}) dT=\int_{T} p v_{h} dT \nonumber
\end{equation}
 
 If degree $(p)< k-2 $  then $ \int_{T} p v_{h} dT$ is computable.
 If degree $(p)=k-1,k $ then  we consider $\int_{T} p v_{h} dT =\int_{T} p \Pi^{\nabla}_{k}(v_{h}) $
 
\textbf{diffusion part:} 
\begin{eqnarray}
\int_{T} \epsilon \Pi_{k-1}(\nabla p) \Pi_{k-1}(\nabla v_{h})&=& \int_{T} \epsilon \nabla p  \Pi_{k-1}(\nabla v_{h}) \nonumber \\
&=&\int_{T} \epsilon \nabla p \nabla v_{h} \nonumber \\
&=& \int_{T} -\epsilon \Delta p v_{h} +\int_{\partial T} \epsilon (\nabla p.\mathbf{n})v_{h}
\label{comp1}
\end{eqnarray}

$\int_{T} \Delta p v_{h}$ is computable using degrees of freedom of $v_{h}$ on triangle since degree$(\Delta p) <k-2$.

$\int_{\partial T} (\nabla p.\mathbf{n})v_{h} $ is computable using degrees of freedom of $v_{h}$ on boundary of triangle since degree$(\nabla p)<k-1$.

\begin{eqnarray}
\int_{T} \vec{b}.\Pi_{k-1}(\nabla p) \Pi_{k}(v_{h}) &=& \int_{T} \vec{b}.\nabla p \Pi_{k}(v_{h}) \nonumber \\
&=& \int_{T} (\vec{b}.\nabla p) v_{h}
\end{eqnarray}

Right hand side  $\int_{T} (\vec{b}.\nabla p) v_{h}$ is computable using elliptic projection operator $\Pi^{\nabla}_{k}$

\begin{eqnarray}
\int_{T} \vec{b}.\Pi_{k-1}(\nabla v_{h}) \Pi_{k}(p) &=& \int_{T} \vec{b}.\Pi_{k-1}(\nabla v_{h}) p \nonumber \\
&=& \int_{T} (\vec{b}. \nabla v_{h}) p +\int_{T} (\vec{b}.\Pi_{k-1}(\nabla v_{h})-\vec{b}.\nabla v_{h})p \nonumber \\
&\approx & \int_{T} (\vec{b}. \nabla v_{h}) p \nonumber \\
&=& -\int_{T} \nabla.(\vec{b}p) v_{h}+\int_{\partial T} (\vec{b}.\mathbf{n}) pv_{h}
\label{comp2}
\end{eqnarray}

error
\begin{equation}
\bigg|\int_{T} (\vec{b}.\Pi_{k-1}(\nabla v_{h})-\vec{b}.\nabla v_{h})p \bigg| \leq  C \| \vec{b} \|_{0,\infty} \|p\|_{0,T} h_{T} |\nabla v_{h}|_{1,T}
\end{equation}

 First part of right hand side $\int_{T} \nabla.(\vec{b}p) v_{h}$  is computable using elliptic projection operator.
 
  For second part we will take $k-1th$ polynomial approximation of $p$, i.e. we will compute $\int_{T} (\vec{b}.\mathbf{n}) \Pi_{k-1}(P)v_{h}$.
  Corresponding error 
  \begin{equation}
  \bigg| \int_{\partial T} (\vec{b}.\mathbf{n}) pv_{h} -\int_{\partial T} (\vec{b}.\mathbf{n}) \Pi_{k-1}(p)v_{h} \bigg| \leq C \|\vec{b}.\mathbf{n} \|_{0,T} h^{1/2}_{T} |p|_{1,T} \|v_{h}\|_{0,T}
  \end{equation}
  
  \begin{eqnarray}
  \int_{T} \vec{b}\Pi_{k}(p) \Pi_{k}(v_{h}) &=&\int_{T} \vec{b} (p) \Pi_{k}(v_{h}) \nonumber \\
  &=& \int_{T} \vec{b} p  v_{h} 
  \end{eqnarray}
  
  Right hand $\int_{T} \vec{b} p  v_{h} $ is computable using elliptic projection operator for  polynomial  $p$ of degrees $k-1,k$.
  
  similarly reaction part $\int_{T} c \Pi_{k}(p) \Pi_{k}(v_{h})$ computable 
  
  \textbf{Stabilization part}
   \begin{eqnarray}
   \int_{T} -\epsilon \Pi_{k-2}(\Delta p) \delta_{T}  \vec{b}.\Pi_{k-1}(\nabla v_{h}) &=& \int_{T} -\epsilon \Delta p \delta_{T} (\vec{b}.\nabla v) \nonumber \\
   &=& \int_{T} \epsilon \delta_{k} \nabla.(\vec{b} \Delta p) v_{h} \nonumber \\
   & -&\int_{\partial T} \epsilon (\vec{b}.\mathbf{n}) \delta_{T} v_{h} \Delta p 
   \end{eqnarray}
   
   Both term of the right hand side is computable.
   
   \begin{eqnarray}
   \int_{T} c \Pi_{k}(p) \delta_{T} \vec{b}.\Pi_{k-1}(\nabla v_{h}) &=& \int_{T} c p \delta_{T} \vec{b}.\Pi_{k-1}(\nabla v_{h})
   \end{eqnarray}
  
  This part is computable using same technique as (\ref{comp2})
  
  \begin{eqnarray}
  \int_{T} \vec{b}.\Pi_{k-1}(\nabla p) \delta_{T} \vec{b}.\Pi_{k-1}(\nabla v_{h})&=& \int_{T} \vec{b}.\nabla p \delta_{T} \vec{b}.\nabla v_{h} \nonumber \\
  &=& \int_{T} -\nabla.(\vec{b}(\vec{b}.\nabla p))v_{h} \nonumber \\ &+&\int_{\partial T} (\vec{b}.\mathbf{n})(\vec{b}.\nabla p) v_{h}
\end{eqnarray}
Both part of right hand side is computable using degrees of freedom of $v_{h}$.

\section{Conclusion \label{sec16}} In this paper we have introduced SDFEM type nonconforming 
VEM framework for convection dominated convection-diffusion reaction equation. To prove 
polynomial consistency we have assumed higher regularity of approximate 
solution, i.e, $v_{h}|_{T} \in H^{2}(T)$ where $v_{h} \in V^{k}_{h}$. The presented 
framework is not convergent in linear nonconforming virtual element space, i.e. it 
requires piecewise higher order polynomial function$(k\geq 2)$ and $f|_{T} \in H^{s}(T)$ 
where $s \geq 1$ and $T$ is an arbitrary element, which may be considered as light 
drawback of this framework. 
\bibliographystyle{plain}
\bibliography{convec_dominatedprob}
\end{document}